\documentclass[11pt,a4paper]{article}
\usepackage[bib=library]{mathesis}

\date{\today}

\author{
	{\large Raúl Tempone,  Sören Wolfers\footnote{Corresponding author. Email address: \texttt{soeren.wolfers@kaust.edu.sa}}}\\
	{\small Computer, Electrical and Mathematical Sciences \& Engineering}\\
	{\small  King Abdullah University of Science and Technology (KAUST)}\\
}

\title{Smolyak's algorithm: A powerful black box for the acceleration of scientific computations}

\renewcommand{\i}{k} 
\newcommand{\mi}{\bm{\i}} 
\newcommand{\cd}{n} 
\newcommand{\vsc}{Y} 
\newcommand{\nm}{\mathcal{A}} 
\newcommand{\w}{\gamma}
\newcommand{\wmi}{\bm{\w}}
\renewcommand{\c}{\beta}
\newcommand{\cmi}{\bm{\c}}
\newcommand{\work}{\text{Work}}
\newcommand{\smol}{\mathcal{S}}

\DeclareMathOperator{\sumoperator}{\Sigma}
\newcommand{\dsum}[1]{{\sumoperator}_{#1}}
\newcommand{\ddiff}[1]{\Delta_{#1}}
\newcommand{\mix}{\text{mix}}
\newcommand{\mis}{\mathcal{I}}
\newcommand{\ml}{\mathcal{M}}

\newcommand{\boundf}{e}
\renewcommand{\O}{\mathcal{O}}
\newcommand{\vsb}{\mathcal{E}}
\newcommand{\vsbg}{\vsb_{(\boundf_j)_{j=1}^{\cd}}}
\newcommand{\workf}{w}

\newcommand{\vsunid}{H^{\beta}([0,1])}
\newcommand{\vsunic}{H^{\alpha}([0,1])}

 \newcommand{\comp}[1]{#1^{c}}
\newcommand{\lc}{s}
\newcommand{\lw}{t}
\newcommand{\lwmi}{\bm{\lw}}
\newcommand{\lcmi}{\bm{\lc}}
\newcommand{\xmi}{\bm{x}}
\newcommand{\ymi}{\bm{y}}
\newcommand{\mumi}{\bm{\mu}}
\newcommand{\numi}{\bm{\nu}}
\newcommand{\compseq}{\N^{\infty}_{c}}
\newcommand{\domPS}{\Gamma}
\newcommand{\ps}{y}
\newcommand{\pde}{u}

\newcommand{\rs}{f}
\newcommand{\psmi}{\bm{\ps}}
\newcommand{\ict}{l}
\newcommand{\mict}{\bm{\ict}}
\newcommand{\lin}{\mathcal{L}}

\begin{document}
\maketitle
\abstract{We provide a general discussion of Smolyak's algorithm for the acceleration of scientific computations. The algorithm first appeared in Smolyak's work on multidimensional integration and interpolation. Since then, it has been generalized in multiple directions and has been associated with the keywords: sparse grids, hyperbolic cross approximation, combination technique, and multilevel methods. Variants of Smolyak's algorithm have been employed in the computation of high-dimensional integrals in finance, chemistry, and physics, in the numerical solution of partial and stochastic differential equations, and in uncertainty quantification. Motivated by this broad and ever-increasing range of applications, we describe a general framework that summarizes fundamental results and assumptions in a concise application-independent manner.
\newline

 \textbf{Keywords} Smolyak algorithm, sparse grids, hyperbolic cross approximation, combination technique, multilevel methods
 }
 
\pagenumbering{roman}
\pagestyle{headings}
\pagenumbering{arabic}

\section{Introduction}
\label{sec:intro}
We study Smolyak's algorithm for the convergence acceleration of general numerical approximation methods
\begin{equation*}
\begin{split}
\nm\colon \N^\cd:=\{0,1,\dots,\}^{\cd}&\to \vsc,
\end{split}
\end{equation*}
which map discretization parameters $\mi=(\i_1,\dots,\i_\cd)\in\N^\cd$ to outputs $\nm(\mi)$ in a Banach space $\vsc$. 

For instance, a straightforward way to approximate the integral of a function $f\colon [0,1]^n\to\R$ is to employ tensor-type quadrature formulas, which evaluate $f$ at the nodes of a regular grid within $[0,1]^n$. This gives rise to an approximation method where $\i_j$ determines the grid resolution in direction of the $j$-th coordinate axis, $j\in\{1,\dots,\cd\}$. Smolyak himself derived and studied his algorithm in this setting, where it leads to evaluations in the nodes of \emph{sparse grids} \cite{smolyak1963quadrature, Zenger91}.  Another example, which emphasizes the distinctness of sparse grids and the general version of Smolyak's algorithm considered in this work, is integration of a univariate function $f\colon \R\to\R$ that is not compactly supported but exhibits sufficient decay at infinity. In this case, $\i_1$ could as before determine the resolution of regularly spaced quadrature nodes and $\i_2$ could be used to determine a truncated quadrature domain. Smolyak's algorithm  then leads to quadrature nodes whose density is high near the origin and decreases at infinity, as intuition would dictate.

 
 To motivate Smolyak's algorithm, assume that the approximation method $\nm$ converges to a limit $\nm_{\infty}\in \vsc$ at the rate
\begin{equation}
\label{eq:com:example}
\norm{\nm(\mi)-\nm_\infty}{\vsc}\leq K_1 \sum_{j=1}^{\cd}\i_j^{-\c_j} \quad\forall \mi\in\N^{\cd}
\end{equation}
and requires the work 
\begin{equation}
\label{eq:com:work}
\work(\nm(\mi))= K_2\prod_{j=1}^{\cd}\i_j^{\w_j} \quad\forall \mi\in\N^{\cd}
\end{equation}
for some $K_1>0,K_2>0$ and $\c_j>0$, $\w_j>0$, $j\in\{1,\dots,\cd\}$. 
An approximation of $\nm_{\infty}$ with accuracy $\epsilon>0$ can then be obtained with the choice
\begin{equation}
\label{eq:com:straightforwarddef}
\i_j:=-\Big(\frac{\epsilon}{\cd K_1}\Big)^{-1/\c_j},\quad j\in\{1,\dots,\cd\},
\end{equation}
which requires the work
\begin{equation}
\label{eq:com:curse}
C(\cd,K_1,K_2,\w_1,\dots,\w_\cd,\c_1,\dots,,\c_\cd)\epsilon^{-(\w_1/\c_1+\dots+\w_\cd/\c_\cd)}.
\end{equation}
Here and in the remainder of this work we denote by $C(\dots)$ generic constants that depend only on the quantities in parentheses but may change their value from line to line and from equation to equation.

The appearance of the sum $\w_1/\c_1+\dots+\w_\cd/\c_\cd$ in the exponent above is commonly referred to as the \emph{curse of dimensionality}. Among other things, we will show (see \Cref{exa}) that if the bound in \Cref{eq:com:example} holds in a slightly stronger sense, then Smolyak's algorithm can replace this dreaded sum by $\max_{j=1}^{\cd}\w_j/\c_j$, which means that it yields convergence rates that are, up to possible logarithmic factors, independent of the number of discretization parameters. In the general form presented here, Smolyak's algorithm forms linear combinations of the values $\nm(\mi)$, $\mi\in\N^{\cd}$, based on
\begin{enumerate}
\item an infinite decomposition of $\nm_{\infty}$ and
\item a knapsack approach to truncate this decomposition.
\end{enumerate} 
Since the decomposition is independent of the particular choice of $\nm$ and the truncation relies on easily verifiable assumptions on the decay and work of the decomposition terms, Smolyak's algorithm is a powerful black box for the non-intrusive acceleration of scientific computations. In the roughly 50 years since its first description, applications in various fields of scientific computation have been described; see, for example, the extensive survey article \cite{bungartz2004sparse}. The goal of this work is to summarize previous results in a common framework and thereby encourage further research and exploration of novel applications. 
While some of the material presented here may be folklore knowledge in the sparse grids community, we are not aware of any published sources that present this material in a generally applicable fashion.
\newline

The remainder of this work is structured as follows. In \Cref{sec:decomp}, we introduce the infinite decomposition of $\nm_{\infty}$ that is at the core of Smolyak's algorithm. In \Cref{sec:com:truncation}, we introduce spaces of approximation methods $\nm\colon \N^\cd\to\vsc$ that allow for efficient solutions of the resulting truncation problem. In \Cref{sec:com:convergence}, we derive explicit convergence rates for Smolyak's algorithm in common examples of such spaces. Finally, in \Cref{sec:com:applications}, we discuss how various previous results can be deduced within the framework presented here.

\section{Decomposition}
\label{sec:decomp} Smolyak's algorithm is based on a decomposition of $\nm_{\infty}$ that is maybe most simply presented in the continuous setting. Here, Fubini's theorem and the fundamental theorem of calculus show that any function $f\colon \Rnn^{\cd}:=[0,\infty)^{\cd}\to \vsc$ with $f\equiv 0$ on $\partial\Rnn^{\cd}$ satisfies
\begin{equation}
\label{eq:com:rectint}
f(x)=\int_{\prod_{j=1}^\cd [0,x_j]}\partial_{1}\dots\partial_{\cd}f(s) \;ds\quad\forall x\in \Rnn^{\cd},
\end{equation}
questions of integrability and differentiability aside. Moreover, if $f$ converges to a limit $f_{\infty}\in\vsc$ as $\min_{j=1}^{\cd}x_j\to\infty$, then 
\begin{equation}
\label{eq:com:cdecomp}
f_{\infty}=\lim_{\min_{j=1}^{\cd}x_j\to\infty}\int_{\prod_{j=1}^\cd [0,x_j]}\partial_{1}\dots\partial_{\cd}f(s) \;ds=\int_{\Rnn^{\cd}}\partial_{\mix}f(s)\,ds,
\end{equation}
where we introduced the shorthand $\partial_{\mix}$ for the mixed derivative $\partial_{1}\dots\partial_{\cd}$.
The crucial observation is now that an approximation of $f_{\infty}$ can be achieved not only by rectangular truncation of the integral in \Cref{eq:com:cdecomp}, which according to \Cref{eq:com:rectint} is equivalent to a simple evaluation of $f$ at a single point, but also by truncation to more complicated domains. These domains should ideally correspond to large values of $\partial_{\mix}f$ in order to minimize the truncation error, but also have to take into consideration the associated computational work. 

To transfer the decomposition in \Cref{eq:com:cdecomp} to the discrete setting, we denote by $\vsc^{\N^\cd}:=\{\nm\colon\N^\cd\to\vsc\}$ the space of all functions from $\N^\cd$ into the Banach space $\vsc$. Next, we define the discrete unidirectional difference and sum operators
\begin{equation*}
\begin{split}
&\hspace{3em}\ddiff{j}\colon  \vsc^{\N^{\cd}}\to\vsc^{\N^{\cd}}\\
(\ddiff{j}\nm)(\mi):=&\begin{cases}\nm(\i_1,\dots,\i_{\cd})-\nm(\i_1,\dots,\i_{j-1},\i_{j}-1,\i_{j+1},\dots,\i_{\cd})\quad\text{if }\i_j>0,\\
\nm(\i_1,\dots,\i_{\cd})\quad \text{else},
\end{cases}
\end{split}
\end{equation*}
\begin{equation*}
\begin{split}
&\dsum{j}:=\ddiff{j}^{-1}\colon \vsc^{\N^{\cd}}\to \vsc^{\N^{\cd}}\\
(\dsum{j}&\nm)(\mi):=\sum_{s=0}^{\i_j}\nm(\i_1,\dots,\i_{j-1},s,\i_{j+1},\dots,\i_\cd),
\end{split}
\end{equation*}
Finally, we 
introduce their compositions, the mixed difference operator
\begin{equation*}
\ddiff{\mix}:=\ddiff{1}\circ\dots\circ\ddiff{\cd}\colon \vsc^{\N^{\cd}}\to\vsc^{\N^{\cd}},
\end{equation*}
and the rectangular sum operator
\begin{equation*}
	\dsum{R}:=\dsum{1}\circ\dots\circ\dsum{\cd}\colon \vsc^{\N^{\cd}}\to\vsc^{\N^{\cd}},
\end{equation*}
which replace the mixed derivative and integral operators that map $f\colon\R^{\cd}\to\vsc$ to $f\mapsto \partial_{\mix}f$ and $x\mapsto \int_{\prod_{j=1}^{\cd}[0,x_j]} f(s)\,ds$, respectively. 

The discrete analogue of \Cref{eq:com:rectint} is now a matter of simple algebra.
\begin{proposition}
\label{pro:com:1}
\begin{enumerate}[(i)]
\item We have $\dsum{R}=\ddiff{\mix}^{-1}$, that is

$$
\nm(\mi)=\sum_{s_1=0}^{\i_1}\dots\sum_{s_\cd=0}^{\i_{\cd}} \ddiff{\mix}\nm(s_1,\dots,s_{\cd})\quad\forall \mi\in\N^{\cd}.
$$
\item We have $\ddiff{\mix}=\sum_{\mathbf{e}\in \{0,1\}^{\cd}} (-1)^{|\mathbf{e}|_1}S_{\mathbf{e}}$, where $S_{\mathbf{e}}$ is the shift operator defined by 
\begin{equation*}
(S_{\mathbf{e}}\nm)(\mi):=\begin{cases}\nm(\mi-\mathbf{e}),\quad\text{if }\mi-\mathbf{e}\in\N^{\cd}\\
0\quad\text{else}\end{cases}.
\end{equation*} 
\end{enumerate}
\end{proposition}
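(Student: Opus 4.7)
The plan is to prove both parts by reducing to the one-dimensional case and then exploiting commutativity of operators acting on different coordinates.

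For part (i), the key observation is that $\dsum{j}$ and $\ddiff{j}$ are genuine one-dimensional inverses, coordinate by coordinate. I would first verify that $\dsum{j}\ddiff{j} = \mathrm{id}$ on $\vsc^{\N^\cd}$ by a straightforward telescoping computation: the sum $\sum_{s=0}^{\i_j}\ddiff{j}\nm(\dots,s,\dots)$ collapses to $\nm(\dots,\i_j,\dots)$ because the boundary term at $s=0$ is precisely $\nm(\dots,0,\dots)$ by the case distinction in the definition of $\ddiff{j}$. Next, I would note that $\ddiff{i}$ and $\dsum{j}$ commute whenever $i\neq j$, since they act on disjoint coordinates — this is the only not-entirely-trivial bookkeeping step, but it is immediate from the definitions. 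Iterating then yields
\begin{equation*}
\dsum{R}\ddiff{\mix} = \dsum{1}\cdots\dsum{\cd}\ddiff{1}\cdots\ddiff{\cd} = \dsum{1}\ddiff{1}\cdots\dsum{\cd}\ddiff{\cd} = \mathrm{id},
\end{equation*}
which is precisely the displayed identity after unfolding the definition of $\dsum{R}$.

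For part (ii), I would first rewrite the unidirectional difference as $\ddiff{j} = I - S_{\mathbf{e}_j}$, where $\mathbf{e}_j$ is the $j$-th standard unit vector and $S_{\mathbf{e}_j}$ is the shift operator from the statement (the case $\i_j=0$ in the definition of $\ddiff{j}$ corresponds exactly to the convention $S_{\mathbf{e}_j}\nm = 0$ outside $\N^\cd$). Since the shift operators $S_{\mathbf{e}_j}$ commute pairwise — indeed $S_{\mathbf{a}}S_{\mathbf{b}} = S_{\mathbf{a}+\mathbf{b}}$ on $\N^\cd$ — the composition expands as a standard product-of-binomials identity
\begin{equation*}
\ddiff{\mix} = \prod_{j=1}^{\cd}(I - S_{\mathbf{e}_j}) = \sum_{\mathbf{e}\in\{0,1\}^{\cd}}(-1)^{|\mathbf{e}|_1}S_{\mathbf{e}},
\end{equation*}
where the final equality uses $S_{\sum_{j:\mathbf{e}_j=1}\mathbf{e}_j} = S_{\mathbf{e}}$.

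The only real subtlety, and what I would flag as the main thing to double-check, is that the ``vanish at the boundary'' convention is consistent across both parts: the $\i_j = 0$ branch of $\ddiff{j}$ and the out-of-$\N^\cd$ branch of $S_{\mathbf{e}}$ must match, so that $\ddiff{j} = I - S_{\mathbf{e}_j}$ holds as a genuine identity on $\vsc^{\N^\cd}$ (not just on sequences extended by zero). Once that is confirmed, both statements are essentially algebraic consequences of one-dimensional telescoping and the commutativity of coordinatewise operators.
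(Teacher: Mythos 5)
Your proposal is correct and follows essentially the same route as the paper: part (i) via one-dimensional inversion of $\ddiff{j}$ by $\dsum{j}$ plus commutativity of the coordinatewise operators, and part (ii) via the representation $\ddiff{j}=\Id-S_{\mathbf{e}_j}$ and expansion of the product. Your extra care about the boundary convention (that the $\i_j=0$ branch of $\ddiff{j}$ matches the zero-extension in $S_{\mathbf{e}}$, so that $\ddiff{j}=\Id-S_{\mathbf{e}_j}$ holds identically on $\vsc^{\N^{\cd}}$) is a worthwhile check that the paper leaves implicit.
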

\begin{proof}
Part (i) follows directly from the commutativity of the operators $\{\dsum{j}\}_{j=1}^{\cd}$.
Part (ii) follows from plugging the representation $\ddiff{j}=\Id-S_{\mathbf{e}_j}$, where $\mathbf{e}_j$ is the $j$-th standard basis vector in $\N^{\cd}$, into the definition $\ddiff{\mix}=\ddiff{1}\circ\dots\circ\ddiff{\cd}$, and subsequent expansion.
\end{proof}
Part (i) of the previous proposition shows that, ignoring questions of convergence, discrete functions $\nm\colon \N^{\cd}\to \vsc$ with limit $\nm_{\infty}$ satisfy
\begin{equation}
\label{eq:com:ddecomp}
\nm_{\infty}=\sum_{\mi\in\N^{\cd}}\ddiff{\mix}\nm(\mi)
\end{equation}
in analogy to \Cref{eq:com:cdecomp}. In the next section, we define spaces of discrete functions for which this sum converges absolutely and can be efficiently truncated. We conclude this section by the observation that a necessary condition for the sum in \Cref{eq:com:ddecomp} to converge absolutely is that the unidirectional limits $\nm(\i_1,\dots,\infty,\dots,\i_{\cd}):=\lim_{\i_j\to\infty}\nm(\i_1,\dots,\i_j,\dots,\i_\cd)$ exist. Indeed, by part (i) of the previous proposition, these limits correspond to summation of $\ddiff{\mix}\nm$ over hyperrectangles that are growing in direction of the $j$-th coordinate axis and fixed in all other directions. For instance, in the context of time-dependent partial differential equations this implies stability requirements for the underlying numerical solver, prohibiting explicit time-stepping schemes that diverge when the space-discretization is refined while the time-discretization is fixed.

\section{Truncation}
\label{sec:com:truncation}
For any index set $\mis\subset\N^{\cd}$, we may define Smolyak's algorithm as the approximation of $\nm_{\infty}$ that is obtained by truncation of the infinite decomposition in \Cref{eq:com:ddecomp} to $\mis$,
\begin{equation}
\smol_{\mis}(\nm):=\sum_{\mi\in \mis}\ddiff{\mix}\nm(\mi).
\end{equation}
By definition of $\ddiff{\mix}\nm$, the approximation $\smol_{\mis}(\nm)$ is a linear combination of the values $\nm(\mi)$, $\mi\in\N^{\cd}$ (see \Cref{ssec:com:com} for explicit coefficients). This is the reason for the name \emph{combination technique} that was given to approximations of this form in the context of the numerical approximation of partial differential equations \cite{GriebelSchneiderZenger1992}. 
When one talks about \emph{the} Smolyak algorithm, or \emph{the} combination technique, a particular truncation is usually implied. The general idea here is to include those indices for which the ratio between contribution (measured in the norm of $\vsc$) and required work of the corresponding decomposition term is large. 
To formalize this idea, we require decay of the norms of the decomposition terms and bounds on the work required for their evaluation. 
To express the former, we define for strictly decreasing functions $\boundf_j\colon\N\to\Rp:=(0,\infty)$, $j\in\{1,\dots,\cd\}$ the spaces
\begin{equation*}
\vsbg(\vsc):=\big\{\nm\colon \N^{\cd}\to\vsc :\exists K_1>0 \;\forall \mi\in\N^{\cd}\; \norm{\ddiff{\mix}\nm(\mi)}{\vsc}\leq K_1\prod_{j=1}^{\cd}\boundf_j(\i_j)\big\}.
\end{equation*} 

\begin{proposition}
	\label{pro:com:2}
\begin{enumerate}[(i)]
	\item If
	\begin{equation}
	\label{eq:com:summability}
	\sum_{\mi\in\N^{\cd}}\prod_{j=1}^{\cd}\boundf_j(\i_j)<\infty,
	\end{equation} 
	then any $\nm\in\vsbg(\vsc)$ has a limit $\nm_{\infty}:=\lim_{\min_{j=1}^{\cd}\i_j\to\infty}\nm(\mi)$. Furthermore, the decomposition in \Cref{eq:com:ddecomp} holds and converges absolutely.
\item The spaces $\vsbg(\vsc)$ are linear subspaces of $Y^{\N^{\cd}}$.

\item (\emph{Error expansions}) Assume that the ratios $\boundf_j(k)/\boundf_j(k+1)$ are uniformly bounded above for $k\in\N$ and $j\in\{1,\dots,\cd\}$. For $\mi\in\N^{\cd}$ and $J\subset\{1,\dots,\cd\}$ let  $\mi_J:=(\i_j)_{j\in J}\in \N^{|J|}$. If the approximation error can be written as 
\begin{equation*}
\nm(\mi)-\nm_{\infty}=\sum_{\varnothing\not=J\subset\{1,\dots,\cd\}}\nm_J(\mi_{J})\quad\forall \mi\in\N^{\cd}
\end{equation*}
with functions $\nm_{J}\colon\N^{|J|}\to\vsc$, $J\subset\{1,\dots,\cd\}$ that satisfy 
 \begin{equation*}
 \norm{\nm_J(\mi_J)}{\vsc}\leq \prod_{j\in J}\boundf_j(\i_j)
 \end{equation*}
then 
$$
\nm\in\vsbg(\vsc).
$$

\item (\emph{Multilinearity} \cite{WolfersSparse}) Assume $(\vsc_i)_{i=1}^m$ and $Y$ are Banach spaces and $\ml\colon \prod_{i=1}^m\vsc_i\to \vsc$ is a continuous multilinear map. If
\begin{equation*}
\nm_i\in\vsb_{(e_{j})_{j=\cd_1+\dots+\cd_{i-1}+1}^{\cd_1+\dots+\cd_i}}(\vsc_i)\quad \forall\,i\in\{1,\dots,m\},
\end{equation*}
then
\begin{equation*}
\ml(\nm_1,\dots,\nm_m)\in\vsb_{(e_{j})_{j=1}^{\cd}}(\vsc),
\end{equation*}
 where $\cd:=\cd_1+\dots+\cd_{m}$ and
\begin{equation*}
\ml(\nm_1,\dots,\nm_m)(\mi):=\ml(\nm_1(\mi_1),\dots,\nm_m(\mi_m))\quad\forall  \mi=(\mi_1,\dots,\mi_m)\in\N^{\cd}.
\end{equation*}
\end{enumerate}
\end{proposition}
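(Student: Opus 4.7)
The plan is to handle the four parts in order; the substance lies in parts (iii) and (iv). For part (i), the summability hypothesis factors as $\sum_{\mi \in \N^{\cd}}\prod_j \boundf_j(\i_j) = \prod_j \sum_{k \in \N} \boundf_j(k) < \infty$, so the pointwise bound $\norm{\ddiff{\mix}\nm(\mi)}{\vsc} \leq K_1 \prod_j \boundf_j(\i_j)$ implies that $\sum_\mi \ddiff{\mix}\nm(\mi)$ converges absolutely in $\vsc$. By \Cref{pro:com:1}(i), $\nm(\mi)$ equals the rectangular partial sum of $\ddiff{\mix}\nm$ over $[0,\i_1]\times\cdots\times[0,\i_\cd]$, and absolute convergence forces these partial sums to tend to the total sum as $\min_j \i_j \to \infty$, identifying $\nm_{\infty}$ and establishing \Cref{eq:com:ddecomp}. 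Part (ii) is then immediate from the linearity of $\ddiff{\mix}$ and the triangle inequality.

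For part (iii), I would split $\nm = \nm_{\infty} + \sum_{\varnothing \neq J \subset \{1,\dots,\cd\}} \nm_J$, viewing $\nm_{\infty}$ as a constant function and each $\nm_J$ as a function on $\N^{\cd}$ via $\mi \mapsto \nm_J(\mi_J)$, and exploit linearity of $\ddiff{\mix}$. The constant contribution $\ddiff{\mix}\nm_{\infty}$ is supported only at $\mi = 0$ by the boundary convention in the definition of $\ddiff{j}$ and is trivially bounded there. For each $\nm_J$, the fact that it depends only on $\mi_J$ forces $\ddiff{j}\nm_J$ to vanish whenever $\i_j > 0$ for $j \notin J$, so $\ddiff{\mix}\nm_J(\mi)$ is supported on the face $\{\i_j = 0 : j \notin J\}$ and coincides there with the $J$-mixed difference of $\nm_J$ applied to $\mi_J$. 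Expanding this via \Cref{pro:com:1}(ii) into an alternating sum of $2^{|J|}$ shifts, the hypothesis $\norm{\nm_J(\mi_J)}{\vsc}\leq \prod_{j\in J}\boundf_j(\i_j)$ together with the uniform bound $\boundf_j(k)/\boundf_j(k+1) \leq R$ yields $\norm{\ddiff{\mix}\nm_J(\mi)}{\vsc} \leq C_J \prod_{j\in J}\boundf_j(\i_j)$ on the face. Since $\boundf_j(0)$ is a positive constant for each $j$, the missing factors for $j \notin J$ can be absorbed into $C_J$ to yield a bound by $C_J' \prod_{j=1}^{\cd}\boundf_j(\i_j)$; summing over the finitely many $J$ completes the proof.

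For part (iv), the key ingredient is the commutation identity
\[
\ddiff{\mix}\ml(\nm_1,\dots,\nm_m)(\mi) = \ml\bigl(\ddiff{\mix}^{(1)}\nm_1(\mi_1),\,\dots,\,\ddiff{\mix}^{(m)}\nm_m(\mi_m)\bigr),
\]
where $\ddiff{\mix}^{(i)}$ denotes the mixed difference over the $i$-th coordinate block. This follows from $\ddiff{j} = \Id - S_{\mathbf{e}_j}$: since $\nm_{i'}$ does not depend on coordinate $j$ for $i' \neq i(j)$ (the block containing $j$), multilinearity of $\ml$ lets each $\ddiff{j}$ pull inside $\ml$ and act only on the $i(j)$-th argument; iterating over all $\cd$ coordinates yields the identity. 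Continuity of $\ml$ provides a constant $\norm{\ml}{}$ with $\norm{\ml(v_1,\dots,v_m)}{\vsc} \leq \norm{\ml}{} \prod_i \norm{v_i}{\vsc_i}$, which combined with the block-wise decay bounds $\norm{\ddiff{\mix}^{(i)}\nm_i(\mi_i)}{\vsc_i} \leq K_i \prod_{j \in J_i}\boundf_j(\i_j)$ (with $J_i$ denoting the $i$-th block) gives the claim. I expect the main obstacle to be the bookkeeping in part (iii), where the coordinate-plane support of $\ddiff{\mix}\nm_J$ must be reconciled with a product bound over all $\cd$ coordinates; the bounded-ratio hypothesis is exactly what enables the factors arising from telescoping shifts together with boundary terms $\boundf_j(0)$ to be absorbed into a uniform constant.
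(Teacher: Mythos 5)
Your proposal is correct and follows essentially the same route as the paper's proof: absolute convergence plus rectangular partial sums for (i), linearity for (ii), reduction of each $\nm_J$ to its support face $\{\i_j=0:j\notin J\}$ and an expansion of the mixed difference into shifts controlled by the bounded-ratio hypothesis for (iii), and the commutation of $\ddiff{\mix}$ with $\ml$ plus continuity for (iv). The only cosmetic difference is that the paper phrases the key step of (iii) as ``$\ddiff{\mix}$ preserves $\vsbg(\vsc)$'' after reducing to $J=\{1,\dots,\cd\}$, whereas you carry out the shift expansion directly for each $J$; the substance is identical.
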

\begin{proof}
Since $\vsc$ is a Banach space, the assumption in part (i) shows that for any $\nm\in\vsbg(\vsc)$ the infinite sum in \Cref{eq:com:ddecomp} converges absolutely to some limit $\bar{\nm}$. Since rectangular truncations of this sum yield point values $\nm(\mi)$, $\mi\in\N^{\cd}$ by part (i) of 
\Cref{pro:com:1}, the limit $\nm_{\infty}:=\lim_{\min_{j=1}^{\cd}\i_j\to\infty}\nm(\mi)$ exists and equals $\bar{\nm}$.	
Part (ii) follows from the triangle inequality. 

For part (iii), observe that by part (ii) it suffices to show $\nm_{J}\in\vsbg(\vsc)$ for all $J\subset\{1,\dots,\cd\}$, where we consider $\nm_{J}$ as functions on $\N^{\cd}$ depending only on the parameters indexed by $J$. Since $\ddiff{\mix}=\ddiff{\mix}^{J}\circ\ddiff{\mix}^{J^C}$, where $\ddiff{\mix}^{J}$ denotes the mixed difference operator acting on the parameters in $J$, we then obtain
\begin{equation*}
\ddiff{\mix}\nm_J(\mi)=\begin{cases}
\ddiff{\mix}^{J}\nm_{J}(\mi_J) \quad\text{if }\;\forall j\in J^{C}:\mi_{j}=0\\
0\quad\text{else}.
\end{cases}
\end{equation*}
Hence, it suffices to consider $J=\{1,\dots,\cd\}$. In this case, the assumption $\norm{\nm_{J}(\mi_{J})}{\vsc}\leq C\prod_{j\in J}\boundf_{j}(\i_j)$ is equivalent to $\ddiff{\mix}^{-1}\nm_{J}\in \vsbg(\vsc)$. Thus, it remains to show that $\ddiff{\mix}$ preserves $\vsbg(\vsc)$. This holds by part (ii) of this proposition together with part (ii) of \Cref{pro:com:1} and the fact that shift operators preserve $\vsbg(\vsc)$, which itself follows from the assumption that the functions $\boundf_j(\cdot)/\boundf_j(\cdot+1)$ are uniformly bounded.

Finally, for part (iv) observe that by multilinearity of $\ml$ we have
\begin{equation*}
\ddiff{\mix}\ml(\nm_1,\dots,\nm_m)=\ml(\ddiff{\mix}^{(1)}\nm_1,\dots,\ddiff{\mix}^{(m)}\nm_m),
\end{equation*}
where the mixed difference operator on the left hand side acts on $\cd=\cd_1+\dots+\cd_m$ coordinates, whereas those on the right hand side only act on the $\cd_i$ coordinates of $\nm_i$. By continuity of $\ml$ we have
\begin{equation*}
\norm{\ml(\ddiff{\mix}^{(1)}\nm_1,\dots,\ddiff{\mix}^{(m)}\nm_m)(\mi)}{\vsc}\leq C\prod_{i=1}^{m}\norm{\ddiff{\mix}^{(i)}\nm_i(\mi_i)}{\vsc_i},
\end{equation*}
for some $C>0$, from which the claim follows.
\end{proof}
Parts (iii) and (iv) of the previous proposition provide sufficient conditions to verify $\nm\in\vsbg(\vsc)$ without analyzing mixed differences directly.
\begin{example}
\label{exa}
	\begin{enumerate}[(i)]
		\item 
		After an exponential reparametrization, the assumptions in  \Cref{eq:com:example,eq:com:work} become
	\begin{equation*}
	\norm{\nm(\mi)-\nm_\infty}{\vsc}\leq K_1 \sum_{j=1}^{\cd}\exp(-\c_j\i_j) \quad\forall \mi\in\N^{\cd}
	\end{equation*}
and
	\begin{equation*}
	\work(\nm(\mi))= K_2\prod_{j=1}^{\cd}\exp(\w_j\i_j) \quad\forall \mi\in\N^{\cd},
	\end{equation*} respectively.
		  If we slightly strengthen the first and assume that 
		\begin{equation*}
		\nm(\mi)-\nm_{\infty}=\sum_{j=1}^{\cd}\nm_j(\i_j)\quad\forall \mi\in\N^\cd
		\end{equation*}
		with functions $\nm_j$ that satisfy 
		\begin{equation*}
		\norm{\nm_j(\i_j)}{\vsc}\leq C\exp(-\c_j\i_j),\quad\forall \i_j\in\N
		\end{equation*} for some $C>0$ and $\c_j>0$, $j\in\{1,\dots,\cd\}$, then 
		\begin{equation*}
		\nm\in\vsbg \quad\text{with } \boundf_{j}(\i_j):=\exp(-\c_j\i_j),
		\end{equation*}
		by part (iii) of \Cref{pro:com:2}. 
		\Cref{thm:main} below then shows that Smolyak's algorithm applied to $\nm$ requires only the work $\epsilon^{-\max_{j=1}^{\cd}\{\w_j/\c_j\}}$, up to possible logarithmic factors, to achieve the accuracy $\epsilon>0$.  
		
		\item Assume we want to approximate the integral of a function $\rs\colon  [0,1]\to\R$ but are only able to evaluate approximations $\rs_{\i_2}$, $\i_2\in\N$ of $\rs$ with increasing cost as $\i_2\to\infty$. Given a sequence $S_{\i_1}$, $\i_1\in\N$ of linear quadrature formulas, the straightforward approach would be to fix sufficiently large values of $\i_1$ and $\i_2$ and then approximate the integral of $\rs_{\i_2}$ with the quadrature formula $S_{\i_1}$. Formally, this can be written as
		\begin{equation*}
		\nm(\i_1,\i_2):=S_{\i_1}\rs_{\i_2}.
		\end{equation*}
		To show decay of the mixed differences $\ddiff{\mix}\nm$, observe that the application of quadrature formulas to functions is linear in both arguments, which means that we may write
		\begin{equation*}
		\nm(\i_1,\i_2)=\ml(S_{\i_1},\rs_{\i_2})=\ml(\nm_1(\i_1),\nm_2(\i_2))
		\end{equation*}
		where $\nm_1(\i_1):=S_{\i_1}$, $\nm_2(\i_2):=\rs_{\i_2}$, and $\ml$ is the application of linear functionals to functions on $[0,1]$. Assume, for example, that the functions $\rs_{\i_2}$ converge to $\rs$ in some Banach space $B$ of functions on $[0,1]$ as $\i_2\to\infty$, and that the quadrature formulas $S_{\i_1}$ converge to the integral operator $\int$ in the continuous dual space $B^*$ as $\i_1\to\infty$. The decay of the mixed differences $\ddiff{\mix}\nm(\i_1,\i_2)$ then follows from part (iv) of \Cref{pro:com:2}, since $\ml$ is a continuous bilinear map from $B^* \times B$ to $\R$. We will see in \Cref{ssec:mlquadrature} below that the application of Smolyak's algorithm in this example yields so called \emph{multilevel quadrature formulas}. This connection between Smolyak's algorithm and multilevel formulas was observed in \cite{harbrecht2013multilevel}.
		
		\item \label{exa3} Assume that we are given approximation methods $\nm_j\colon\N\to \vsc_j$, $j\in\{1,\dots\cd\}$ that converge at the rates $\norm{\nm_{j}(\i_j)-\nm_{\infty,j}}{\vsc_j}\leq \boundf_j(\i_j)$ to limits $\nm_{\infty,j}\in \vsc_j$, where $\boundf_j\colon\N\to \Rp$ are strictly decreasing functions. Define the tensor product algorithm
		
		\begin{equation*}
		\begin{split}
		\nm\colon\N^{\cd}\to \vsc:=\vsc_1\otimes\dots\otimes\vsc_{\cd},\\
		\nm(\mi):=\nm_{1}(\i_1)\otimes\dots\otimes\nm_{\cd}(\i_\cd).
		\end{split}
		\end{equation*}
		If the algebraic tensor product $\vsc$ is equipped with a norm that satisfies $\norm{y_1\otimes\dots\otimes y_\cd}{\vsc}\leq\norm{y_1}{\vsc_1}\dots \norm{y_{\cd}}{\vsc_{\cd}}$, then $\nm\in\vsbg(\vsc)$. Indeed, $\nm_j\in\vsb_{\boundf_j}(\vsc_j)$ by part (iii) of \Cref{pro:com:2}, thus $\nm\in\vsbg(\vsc)$ by part (iv) of the same proposition. 

\end{enumerate}
\end{example}
Similar to the product type decay assumption on the norms $\norm{\ddiff{\mix}\nm(\mi)}{\vsc}$, which we expressed in the spaces $\vsbg$, we assume in the remainder that 
\begin{equation}
\label{eq:com:workfinal}
\work(\ddiff{\mix}\nm(\mi))\leq K_2\prod_{j=1}^{\cd}\workf_j(\i_j)\quad\forall \mi \in \N^{\cd}
\end{equation}
for some $K_2>0$ and increasing functions $\workf_j\colon\N\to \Rp$. By part (ii) of \Cref{pro:com:1}, such a bound follows from the same bound on the evaluations $\nm(\mi)$ themselves.

\subsection{Knapsack problem}
The goal of this subsection is to  describe quasi-optimal truncations of the decomposition in \Cref{eq:com:ddecomp} for functions $\nm\in\vsbg(\vsc)$ that satisfy \Cref{eq:com:workfinal}. 
Given a work budget $W>0$, a quasi-optimal index set solves the knapsack problem
\begin{equation}
\label{eq:com:knapsack}
\begin{split}
\max_{\mis\subset\N^{\cd}}&\quad|\mis|_{\boundf}:=K_1\sum_{\mi\in\mis}\prod_{j=1}^{\cd}\boundf_{j}(\i_j)\\
\text{subject to}&\quad|\mis|_{\workf}:=K_2\sum_{\mi\in\mis}\prod_{j=1}^{\cd}\workf_j(\i_j)\leq W.
\end{split}
\end{equation}
The term that is maximized here is motivated by 
\begin{equation*}
\norm{\smol_{\mis}(\nm)-\nm_{\infty}}{\vsc}=\norm{\sum_{\mi\in\comp{\mis}}\ddiff{\mix}\nm(\mi)}{\vsc}\approx\sum_{\mi\in\comp{\mis}}\norm{\ddiff{\mix}\nm(\mi)}{\vsc}\approx |\comp{\mis}|_{\boundf}
\end{equation*}

\Cref{pro:com:knapsack} below shows that for any $W>0$ the knapsack problem has an optimal value. However, finding corresponding optimal sets is NP-hard \cite[Section 1.3]{martello1990knapsack}. As a practical alternative one can use Dantzig's approximation algorithm \cite[Section 2.2.1]{martello1990knapsack}, which selects indices for which the ratio between contribution and work is above some threshold $\delta(W)>0$,
\begin{equation}
\label{eq:com:dantzig}
\mis_{W}:=\{\mi\in\N^{\cd}:\prod_{j=1}^{\cd}\boundf_{j}(\i_j)/\workf_{j}(\i_j)>\delta(W)\},
\end{equation}
where $\delta(W)$ is chosen minimally such that $|\mis_{W}|_{\workf}\leq W$.
\begin{proposition}
	\label{pro:com:knapsack}
	\begin{enumerate}[(i)]
		\item The knapsack problem in \Cref{eq:com:knapsack} has a (not necessarily unique) solution, in the sense that a maximal value of $|\mis|_{\boundf}$ is attained. We denote this maximal value by $E^*(W)$.
		\item Any set $\mis^{*}$ for which $|\mis|_{\boundf}=E^*(W)$ is finite and downward closed: If $\mi\in\mis^{*}$ and  $\tilde{\mi}\in\N^{\cd}$ satisfies $\tilde{\mi}\leq \mi$ componentwise, then $\tilde{\mi}\in\mis^{*}$. The same holds for the set $\mis_{W}$ from \Cref{eq:com:dantzig}.
		
		\item The set $\mis_{W}$ from \Cref{eq:com:dantzig} satisfies
		\begin{equation*}
		|\mis_{W}|_{\boundf}\geq \frac{|\mis_{W}|_{\workf}}{W} E^*(W).
		\end{equation*}
		This means that if $\mis_{W}$ uses all of the available work budget, $|\mis_{W}|_{\workf}=W$, then it is a solution to the knapsack problem. In particular, Dantzig's solutions are optimal for the work $|\mis_{W}|_{\workf}$ they require, but not necessarily for the work  $W$ they were designed for.
	\end{enumerate}
\end{proposition}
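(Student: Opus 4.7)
The plan rests on a single \emph{swap/replacement} observation: since each $\boundf_j$ is strictly decreasing and each $\workf_j$ increasing, replacing an index $\mi$ by $\tilde\mi<\mi$ (componentwise, strict in at least one coordinate) inside any $\mis\subset\N^{\cd}$ strictly increases $|\mis|_{\boundf}$ and weakly decreases $|\mis|_{\workf}$. I will leverage this for all three parts.

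For parts (i) and (ii), the swap principle immediately yields downward-closedness of any maximizer $\mis^{*}$: if some $\mi\in\mis^{*}$ had a strict lower neighbor $\tilde\mi\notin\mis^{*}$, the swap would produce a feasible set with strictly larger $|\cdot|_{\boundf}$, contradicting optimality. Downward-closedness of $\mis_{W}$ is likewise immediate from monotonicity of $\prod_{j}\boundf_{j}(\i_{j})/\workf_{j}(\i_{j})$ in $\mi$. Since any feasible $\mis$ has $|\mis|\leq N:=W/\bigl(K_{2}\prod_{j}\workf_{j}(0)\bigr)$, a downward-closed feasible set must sit inside $\{0,\dots,N-1\}^{\cd}$; hence only finitely many downward-closed feasible sets exist, and maximizing $|\cdot|_{\boundf}$ over this finite collection establishes both the existence statement in (i) and the finiteness claim in (ii).

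Part (iii) is the textbook fractional-knapsack bound. Set $\tau:=(K_{1}/K_{2})\delta(W)$, so that $\mi\in\mis_{W}$ is equivalent to $K_{1}\prod_{j}\boundf_{j}(\i_{j})>\tau K_{2}\prod_{j}\workf_{j}(\i_{j})$ while its complement satisfies the reverse weak inequality. Summing the strict version over $\mis_{W}$ gives $\tau\leq|\mis_{W}|_{\boundf}/|\mis_{W}|_{\workf}$; for any maximizer $\mis^{*}$ of (i), splitting on the symmetric difference $\mis^{*}\triangle\mis_{W}$ yields
\begin{align*}
E^{*}(W)-|\mis_{W}|_{\boundf}&=|\mis^{*}\setminus\mis_{W}|_{\boundf}-|\mis_{W}\setminus\mis^{*}|_{\boundf}\\
&\leq\tau\bigl(|\mis^{*}\setminus\mis_{W}|_{\workf}-|\mis_{W}\setminus\mis^{*}|_{\workf}\bigr)=\tau\bigl(|\mis^{*}|_{\workf}-|\mis_{W}|_{\workf}\bigr)\leq\tau\bigl(W-|\mis_{W}|_{\workf}\bigr).
\end{align*}
Substituting the upper bound on $\tau$ and rearranging gives $E^{*}(W)\leq (W/|\mis_{W}|_{\workf})|\mis_{W}|_{\boundf}$, which is exactly the claimed inequality.

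The conceptual heart of the proof is the symmetric-difference estimate above; the only genuine technicality is verifying that $\delta(W)$ is actually attained as a minimum. Because each $\boundf_{j}/\workf_{j}$ is strictly decreasing, every super-level set $\{\mi:\prod_{j}\boundf_{j}(\i_{j})/\workf_{j}(\i_{j})>\delta\}$ is finite, and $\delta\mapsto |\mis_{W}(\delta)|_{\workf}$ is a right-continuous, non-increasing step function, so the infimum defining $\delta(W)$ is attained. The degenerate case $\mis_{W}=\varnothing$ reduces the desired inequality to the trivial $0\geq 0$, which is legitimate since $E^{*}(W)$ is finite (bounded by $N\cdot K_{1}\prod_{j}\boundf_{j}(0)$ from the cardinality estimate above).
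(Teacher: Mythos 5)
Your proof is correct and follows essentially the same route as the paper: parts (i) and (ii) via the replacement argument and the finiteness of downward closed sets inside $\{0,\dots,N-1\}^{\cd}$, and part (iii) via the threshold property of Dantzig's set. The only difference is one of detail: the paper dispatches (iii) in one line by asserting $|\mis_{W}|_{\boundf}/|\mis_{W}|_{\workf}\geq |\mis^{*}|_{\boundf}/|\mis^{*}|_{\workf}$, whereas your symmetric-difference estimate (together with your checks that $\delta(W)$ is attained and that the case $\mis_{W}=\varnothing$ is harmless) supplies the justification the paper omits.
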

\begin{proof}
There is an upper bound $N$ on the cardinality of admissible sets in \Cref{eq:com:knapsack} since the functions $\workf_j$ are increasing and strictly positive. Furthermore, replacing an element $\mi$ of an admissible set by $\tilde{\mi}$ with $\tilde{\mi}\leq\mi$ decreases $|\cdot|_{\workf}$ and increases $|\cdot|_{\boundf}$. This proves parts (i) and (ii), as there are only finitely many downward closed sets of cardinality less than $N$ (for example, all such sets are subsets of $\{0,\dots,N-1\}^{\cd}$). Part (iii) follows directly from the inequality $|\mis_{W}|_\boundf/|\mis_{W}|_{\workf}\geq |\mis^*|_{\boundf}/|\mis^*|_{\workf}$, where $\mis^{*}$ is a set that attains the maximal value $E^*(W)$. 
\end{proof}
Even in cases where no bounding functions $\boundf_j$ and $\workf_j$ are available, parts (ii) and (iii) of the previous proposition serve as motivation for adaptive algorithms that progressively build a downward closed set $\mis$ by adding at each step a multi-index that maximizes a gain-to-work estimate \cite{MR2163199,MR2366325}.

\subsection{Combination rule}
\label{ssec:com:com}
Part (ii) of \Cref{pro:com:1} provides a way to express the approximations $\smol_{\mis}(\nm)$ in a succinct way as linear combinations of different values of $\nm$. This yields the \emph{combination rule}, which in its general form says that
\begin{equation*}
\smol_{\mis}(\nm)=\sum_{\mi\in\mis}c_{\mi}\nm(\mi)
\end{equation*}
with
\begin{equation}
\label{eq:com:coeff}
c_{\mi}=\sum_{e\in\{0,1\}^{\cd}: \mi+e\in\mis}(-1)^{|e|_1}
\end{equation}
for any downward closed set $\mis$.
It is noteworthy that $c_{\mi}=0$ for all $\mi$ with $\mi+(1,\dots,1)\in\mis$, because for such $\mi$ the sum in \Cref{eq:com:coeff} is simply the expansion of $(1-1)^{\cd}$.

When $\mis$ is a standard simplex, $\mis=\{\mi\in\N^{\cd}:|\mi|_{1}\leq L\}$, the following explicit formula holds \cite{Wasilkowski1995}:
\begin{equation*}
c_{\mi}=\begin{cases}
(-1)^{L-|\mi|_{1}}\binom{\cd-1}{L-|\mi|_{1}}\quad \text{if } L-\cd+1\leq |\mi|_{1}\leq L\\
0\quad \text{else}.
\end{cases}
\end{equation*}

\section{Convergence analysis}
\label{sec:com:convergence}

\subsection{Finite-dimensional case}
\label{ssec:com:finite}
We consider an approximation method $\nm\in \vsbg(\vsc)$ with 
\begin{equation}
\label{eq:f1}
\boundf_j(\i_j)=K_{j,1}\exp(-\c_j\i_j)(\i_j+1)^{\lc_j}\quad\forall j\in\{1,\dots,\cd\}
\end{equation}
and assume that
\begin{equation}
\label{eq:f1}
\work(\nm(\mi))\leq \prod_{j=1}^{\cd}K_{j,2}\exp(\w_j\i_j)(\i_j+1)^{\lw_j} \quad\forall \mi \in \N^{\cd}
\end{equation}
with $K_{j,1}>0$, $K_{j,2}>0$, $\c_j> 0$, $\w_j> 0$, $\lc_j\geq 0$, $\lw_j\geq 0$. The required calculations with $\lcmi\equiv \lwmi\equiv 0$ were previously done in various specific contexts, see for example \cite{haji2015multi}.
According to \Cref{pro:com:knapsack}, quasi-optimal index sets are given by 
\begin{equation*}
\begin{split}
\mis_{\delta}:&=\big\{\mi\in\N^{\cd}:\prod_{j=1}^{\cd}\frac{K_{j,1}\exp(-\c_j\i_j)(\i_j+1)^{\lc_j}}{K_{j,2}\exp(\w_j\i_j)(\i_j+1)^{\lw_j}}>\delta \big\}\\
&=\big\{\mi\in\N^{\cd}:\frac{K_1}{K_2}\exp(-(\cmi+\wmi)\cdot\mi)\prod_{j=1}^{\cd}(\i_j+1)^{\lc_j-\lw_j}>\delta \big\}
\end{split}
\end{equation*}
for $\delta>0$, where $K_1:=\prod_{j=1}^{\cd}K_{j,1}$, $K_2:=\prod_{j=1}^{\cd}K_{j,2}$, and  $\cmi:=(\c_1,\dots,\c_{\cd})$, $\wmi:=(\w_1,\dots,\w_{\cd})$.
For the analysis in this section, we use the slightly simplified sets
\begin{equation*}
\mis_{L}:=\left\{\mi\in\N^{\cd}:\exp((\cmi+\wmi)\cdot\mi)\leq \exp(L) \right\}=\left\{\mi\in\N^{\cd}:(\cmi+\wmi)\cdot \mi\leq L \right\},
\end{equation*}
with $L\to\infty$, where, by abuse notation, we distinguish the two families of sets by the subscript letter.

The work required by $\smol_{L}(\nm):=\smol_{\mis_{L}}(\nm)$ satisfies
\begin{equation*}
\work(\smol_{L}(\nm))\leq \sum_{\mi\in\mis_{L}}\prod_{j=1}^{\cd}K_{j,2}\exp(\w_j\i_j)(\i_j+1)^{\lw_j}=K_2\sum_{(\cmi+\wmi)\cdot\mi\leq L}\exp(\wmi\cdot\mi)(\mi+\mathbf{1})^{\lwmi}
\end{equation*}
with $(\mi+\mathbf{1})^{\lwmi}:=\prod_{j=1}^{\cd}(\i_j+1)^{\lw_j}$.
Similarly, the approximation error satisfies
\begin{equation}
\label{eq:pt}
\norm{\smol_{L}(\nm)-\nm_{\infty}}{\vsc}\leq \sum_{\mi\in\comp{\mis_{L}}}\prod_{j=1}^{\cd}K_{j,1}\exp(-\c_j\i_j)\i_j^{\lc_j}=K_1 \sum_{(\cmi+\wmi)\cdot\mi>L}\exp(-\cmi\cdot\mi)(\mi+\mathbf{1})^{\lcmi}.
\end{equation}
The exponential sums appearing in the work and residual bounds above are estimated in the appendix of this work, with the results
\begin{equation}
\label{eq:com:expwork}
\work(\smol_{L}(\nm))\leq K_2C(\wmi,\lwmi,\cd)\exp(\frac{\rho}{1+\rho}L)(L+1)^{\cd^*-1+\lw^*}
\end{equation}
and
\begin{equation}
\label{eq:com:experror}
\norm{\smol_{L}(\nm)-\nm_{\infty}}{\vsc}\leq K_1C(\cmi,\lcmi,\cd)\exp(-\frac{1}{1+\rho}L)(L+1)^{\cd^*-1+\lc^*},
\end{equation}
where $\rho:=\max_{j=1}^{\cd}\w_j/\c_j$, $J:=\{j\in\{1,\dots,\cd\}:\w_j/\c_j=\rho\}$, $\cd^*:=|J|$, $\lc^*:=\sum_{j\in J}\lc_j$, $\lw^*:=\sum_{j\in J}\lw_j$. 
We may now formulate the main result of this section by rewriting the bound in \Cref{eq:com:expwork} in terms of the right-hand side of \Cref{eq:com:experror}.
\begin{theorem}
	\label{thm:main}
	 Under the previously stated assumptions on $\nm$ and for small enough $\epsilon>0$, we may choose $L>0$ such that 
\begin{equation*}
\norm{\smol_{L}(\nm)-\nm_{\infty}}{\vsc}\leq \epsilon
\end{equation*}
and
\begin{equation*}
\work(\smol_{L}(\nm))\leq K_1^{\rho}K_2C(\cmi,\wmi,\lcmi,\lwmi,\cd)\epsilon^{-\rho}|\log\epsilon|^{(\cd^*-1)(1+\rho)+\rho\lc^*+\lw^{*}}.
\end{equation*}
\hfill\qedsymbol
\end{theorem}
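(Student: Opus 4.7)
The proof is essentially a calibration of the parameter $L$ against the error tolerance $\epsilon$, followed by a substitution into the work bound. The inputs to the proof are already available: the residual bound \eqref{eq:com:experror} and the work bound \eqref{eq:com:expwork} that were derived from the appendix estimates for the two exponential sums. Nothing new needs to be summed; the remaining task is algebraic bookkeeping.

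The plan is as follows. First, I would choose $L=L(\epsilon)$ so that the right-hand side of \eqref{eq:com:experror} is at most $\epsilon$. Taking logarithms in that bound and isolating the $\exp(-L/(1+\rho))$ factor, the condition becomes
\begin{equation*}
\frac{L}{1+\rho} \geq \log K_1 + \log C(\cmi,\lcmi,\cd) + (\cd^*-1+\lc^*)\log(L+1) + |\log\epsilon|.
\end{equation*}
For $\epsilon$ small enough, the ansatz $L := (1+\rho)\,|\log\epsilon| + (1+\rho)(\cd^*-1+\lc^*)\log|\log\epsilon| + C'$ with $C'=C'(K_1,\cmi,\lcmi,\cd)$ chosen sufficiently large solves this inequality, since the right-hand side then grows only like $|\log\epsilon| + \log|\log\epsilon|$ plus a constant.

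Second, I would substitute this $L$ into the work bound \eqref{eq:com:expwork}. The exponential factor there is
\begin{equation*}
\exp\!\Big(\tfrac{\rho}{1+\rho}L\Big) = \Big[\exp\!\Big(\tfrac{L}{1+\rho}\Big)\Big]^{\rho},
\end{equation*}
and from the way $L$ was chosen in step one we have $\exp(L/(1+\rho)) \leq C\,K_1\,(L+1)^{\cd^*-1+\lc^*}\epsilon^{-1}$, so raising to the $\rho$-th power produces the prefactor $K_1^{\rho}\,\epsilon^{-\rho}\,(L+1)^{\rho(\cd^*-1+\lc^*)}$ (this is precisely where the $K_1^{\rho}$ claimed in the theorem is generated). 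Combining this with the polynomial $(L+1)^{\cd^*-1+\lw^*}$ already present in the work bound gives an overall polynomial factor $(L+1)^{(\cd^*-1)(1+\rho)+\rho\lc^*+\lw^*}$.

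Third, since by construction $L = O(|\log\epsilon|)$, the polynomial factor $(L+1)^{(\cd^*-1)(1+\rho)+\rho\lc^*+\lw^*}$ is bounded by $C\,|\log\epsilon|^{(\cd^*-1)(1+\rho)+\rho\lc^*+\lw^*}$, and all remaining $\epsilon$- and $K_1,K_2$-independent quantities can be absorbed into a constant $C(\cmi,\wmi,\lcmi,\lwmi,\cd)$. This yields the claimed work bound. There is no genuine obstacle; the only delicate point is book-keeping the $K_1^{\rho}$ term, which emerges naturally from the algebraic identity $\exp(\rho L/(1+\rho)) = \exp(L/(1+\rho))^\rho$ together with the definition of $L$. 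The requirement that $\epsilon$ be small enough is exactly what ensures the implicit inequality for $L$ in step one admits the explicit ansatz used above.
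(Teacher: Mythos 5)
Your proposal is correct and is exactly the argument the paper intends: the theorem is stated with a \qedsymbol{} and no written proof precisely because it follows by choosing $L$ so that the right-hand side of \eqref{eq:com:experror} equals $\epsilon$ and substituting $\exp(L/(1+\rho))\approx K_1\,C\,(L+1)^{\cd^*-1+\lc^*}\epsilon^{-1}$ into \eqref{eq:com:expwork}, which is what you do. Your bookkeeping of the exponent $(\cd^*-1)(1+\rho)+\rho\lc^*+\lw^*$ and of the factor $K_1^{\rho}$ matches the statement.
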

This means that we have eliminated the sum in the exponent of the bound in \Cref{eq:com:curse}, as announced in \Cref{sec:intro}. The additional logarithmic factors in \Cref{thm:main} vanish if the worst ratio of work and convergence exponents, $\rho$, is attained only for a single index $j_{\max}\in\{1,\dots,\cd\}$ and if $\lw_{j_{\max}}=\lc_{j_{\max}}=0$.
\begin{remark}
\label{rem:com:cov}
If $\wmi\equiv 0$ and $\cmi\equiv 0$, that is when both work and residual depend algebraically on all parameters, then an exponential reparametrization, $\exp(\tilde{\mi}):=\mi$, takes us back to the situation considered above. The preimage of $\mis_{L}=\{\tilde{\mi} : ({\lcmi}+{\lwmi})\cdot \tilde{\mi}\leq L\}$ under this reparametrization is $\{\mi : \prod_{j=1}^{\cd}\i_j^{\lc_j+\lw_j}\leq \exp(L)\}$, whence the name \emph{hyperbolic cross approximation} \cite{DuTeUl2015}.
\end{remark}
\begin{remark}
	\label{rem:orth}
When the terms $\ddiff{\mix}\nm(\mi)$, $\mi\in\N^{\cd}$ are orthogonal to each other, we may substitute the Pythagorean theorem for the triangle inequality in \Cref{eq:pt}. As a result, the exponent of the logarithmic factor in \Cref{thm:main} reduces to $(\cd^{*}-1)(1+\rho/2)+\rho \lc^*+\lw^*$.
\end{remark}

\subsection{Infinite-dimensional case}
\label{ssec:com:infinite}
The theory of the previous sections can be extended to the case $\cd=\infty$. In this case the decomposition in \Cref{eq:com:ddecomp} becomes
\begin{equation}
\label{eq:com:infdecomp}
\nm_{\infty}=\sum_{\mi\in\compseq}\ddiff{\mix}\nm(\mi),
\end{equation}
where $\compseq$ are the sequences with finite support, and $\ddiff{\mix}\nm(\mi)$ is defined as $\ddiff{1}\circ\dots\circ\ddiff{\cd_{\max}}\nm(\mi)$, where $\cd_{\max}$ is a bound on the support of $\mi$. 
In particular, every term in \Cref{eq:com:infdecomp} is a linear combination of values of $\nm$ with only finitely many nonzero discretization parameters.

We consider the case $\nm\in\vsbg(\vsc)$ for  
\begin{equation*}
\boundf_j(\i_j):=K_{j,1}\exp(-\beta_j\i_j)(\i_j+1)^{\lc} \quad \forall j\geq 1
\end{equation*}
 and $\lc\geq 0$, $K_1:=\prod_{j=1}^{\infty}K_{j,1}<\infty$ $\lc\geq 0$, and we assume constant computational work for the evaluation of the mixed differences $\ddiff{\mix}\nm(\mi)$, i.e. $\workf_j\equiv C$ in \Cref{eq:com:workfinal} for all $j\geq 1$. 
Similarly to the finite-dimensional case, we consider  sets
\begin{equation*}
\mis_{L}:=\big\{\mi\in\compseq:\sum_{j=1}^{\infty}\c_j\i_j\leq L\big\}
\end{equation*}
and the associated Smolyak algorithm
\begin{equation*}
\smol_{L}(\nm):=\sum_{\mis_{L}}\ddiff{\mix}\nm(\mi).
\end{equation*}
The following theorem is composed of results from \cite{GriebelOettershagen} on interpolation and integration of analytic functions; the calculations there transfer directly to the general setting. 
\begin{theorem}
	\label{thm:com:oettershagen}
	Let $L>0$ and define $N:=|\mis_{L}|=\work(\smol_{L}(\nm))$.
	\begin{enumerate}[(i)]
		
		\item Assume $\lc=0$.
		\begin{itemize}
			\item {\cite[Theorem 3.2]{GriebelOettershagen}} If there exists $\c_0>1$ such that $M:=M(\c_0,(\c_j)_{j=1}^{\infty}):=\sum_{j=1}^{\infty}\frac{1}{\exp(\c_j/\c_0)-1}<\infty$, then
			\begin{equation*}
			\norm{\smol_{L}(\nm)-\nm_{\infty}}{\vsc}\leq \frac{K_1 }{\c_0}\exp(\c_0M)N^{-(\c_0-1)},
			\end{equation*}
			which implies
			\begin{equation*}
			\work(\smol_{L}(\nm))\leq C(K,\beta_0,M)\epsilon^{-1/(\c_0-1)}
			\end{equation*}
			for $\epsilon:=\frac{K_1}{\c_0}\exp(\c_0M)N^{-(\c_0-1)}$.
			\item {\cite[Theorem 3.4]{GriebelOettershagen}} If $\c_j\geq \c_0j$ for $\c_0>0$, $j\geq 1$, then 
			\begin{equation*}
			\norm{\smol_{L}(\nm)-\nm_{\infty}}{\vsc}\leq \frac{2}{\c_0\sqrt{\log N}}N^{1+\frac{1}{4}\c_0-\frac{3}{8}\c_0(\log N)^{1/2}}.
			\end{equation*}
		\end{itemize}
		
		\item Assume $\lc>0$. 
		\begin{itemize}
			\item {\cite[Corollary 4.2 (i)]{GriebelOettershagen}} If there exist $\c_0>1$ and $\delta>0$ such that $M(\c_0,((1-\delta)\c_j)_{j=1}^{\infty})<\infty$, then
			\begin{equation*}
			\norm{\smol_{L}(\nm)-\nm_{\infty}}{\vsc}\leq C(K_1,\delta,\c_0,M,(\c_j)_{j\in\N},\lc)N^{-(\c_0-1)},
			\end{equation*}
			which implies
			\begin{equation*}
			\work(\smol_{L}(\nm))\leq C(K_1,\delta,\c_0,M,(\c_j)_{j\in\N},\lc)\epsilon^{-1/(\c_0-1)}
			\end{equation*}
			for $\epsilon:=C(K_1,\delta,\c_0,M,(\c_j)_{j\in\N},\lc)N^{-(\c_0-1)}$.
			\item {\cite[Corollary 4.2 (ii)]{GriebelOettershagen}} If $\c_j\geq \c_0j$ for $\c_0>0$, then for every $\hat{\c}_0<\c_0$ we have
			\begin{equation*}
			\norm{\smol_{L}(\nm)-\nm_{\infty}}{\vsc}\leq  \frac{C(\hat{\c}_0,M,b)}{\sqrt{\log N}}N^{1+\frac{\hat{\c}_0}{4}-\frac{3}{8}\hat{\c}_0(\log N)^{1/2}}.
			\end{equation*}
		\end{itemize}
	\end{enumerate}
	\hfill\qedsymbol
\end{theorem}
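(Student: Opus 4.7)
My plan is a direct reduction to the calculations in \cite{GriebelOettershagen}, where these estimates are established for interpolation and integration of analytic functions via Chebyshev expansions. The first step is to observe that in that setting the a priori decay of Chebyshev coefficients provides exactly a bound of the form $\norm{\ddiff{\mix}\nm(\mi)}{\vsc}\leq K_1\prod_{j=1}^{\infty}\exp(-\c_j\i_j)(\i_j+1)^{\lc}$, i.e.\ precisely the hypothesis $\nm\in\vsbg(\vsc)$ used here. Every subsequent computation in \cite{GriebelOettershagen} operates solely on this product bound together with the triangle inequality in $\vsc$ and the fact that $\workf_j\equiv C$, so that $\work(\smol_{L}(\nm))=|\mis_{L}|=N$. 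The remainder of the proof is therefore to verify that the abstract setup supplies each input their estimates require, and then to quote the corresponding lines of their arguments.

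For part (i) with $\lc=0$, first sub-item, I would carry out the standard splitting trick: for $\mi\notin\mis_{L}$ we have $\sum_{j}\c_j\i_j>L$, hence $1\leq\exp((\sum_j \c_j\i_j-L)/\c_0)$, and therefore
\begin{equation*}
\sum_{\mi\in\comp{\mis_{L}}}\prod_{j=1}^{\infty}\exp(-\c_j\i_j)\leq \exp(-L/\c_0)\prod_{j=1}^{\infty}\sum_{\i_j=0}^{\infty}\exp\!\bigl(-(1-1/\c_0)\c_j\i_j\bigr).
\end{equation*}
Summing the geometric series and bounding $-\log(1-x)\leq x/(1-x)$ converts the product into $\exp(\c_0 M)$ using the definition of $M$, which, combined with a symmetric lower bound on the cardinality $N=|\mis_{L}|$ (also expressible via the same product), yields the $N^{-(\c_0-1)}$ rate; inverting the relation between $\epsilon$ and $N$ gives the work bound. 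For the second sub-item, the stronger growth $\c_j\geq \c_0 j$ forces the support of admissible $\mi\in\mis_{L}$ to be confined to the first $O(L/(\c_0\log L))$ coordinates, and a two-scale counting of downward-closed sets satisfying $\c_0\sum_j j\i_j\leq L$ produces the stretched-exponential rate $N^{1+\c_0/4-(3/8)\c_0(\log N)^{1/2}}$.

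For part (ii), where $\lc>0$, the algebraic prefactors $(\i_j+1)^{\lc}$ are absorbed into the exponential decay via the elementary inequality $(\i_j+1)^{\lc}\exp(-\c_j\i_j)\leq C_{\lc,\delta}(\c_j)\exp(-(1-\delta)\c_j\i_j)$ with a constant whose logarithm remains summable over $j$ under the hypothesis $M(\c_0,((1-\delta)\c_j)_{j})<\infty$. After this reduction the analysis of part (i) applies with weights $(1-\delta)\c_j$ in place of $\c_j$, and the corresponding constants are tracked as in \cite[Corollary 4.2]{GriebelOettershagen}.

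The main obstacle I anticipate is the second sub-item of each part: the stretched-exponential rate is not a straightforward tail-sum estimate but requires a rather delicate combinatorial decomposition of $\mis_{L}$ into an active block of coordinates of size $\Theta(\sqrt{\log N})$ and an inactive tail whose contribution is handled by the summability of $(\c_j)_{j}$. Rather than reproduce this decomposition in full, I would verify carefully that the hypotheses needed to invoke \cite[Theorem 3.4]{GriebelOettershagen} and \cite[Corollary 4.2 (ii)]{GriebelOettershagen} are already encoded in our abstract setting, and only reproduce the preliminary splitting explicitly; this is justified because, as noted above, their proofs never use the analytic structure of the underlying function spaces, only the product decay of mixed differences and the constant work per decomposition term.
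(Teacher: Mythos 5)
Your reduction is exactly the paper's own argument: the paper gives no proof beyond the remark that the theorem is composed of results from \cite{GriebelOettershagen} whose calculations transfer directly to the general setting, since they use only the product-type decay of the mixed differences, the triangle inequality in $\vsc$, and $\work(\smol_{L}(\nm))=|\mis_{L}|=N$ --- precisely the hypotheses you verify before quoting the cited estimates. Your supplementary sketches (the tail-splitting with geometric series, the absorption of the factors $(\i_j+1)^{\lc}$ at the cost of replacing $\c_j$ by $(1-\delta)\c_j$) faithfully outline those cited computations, up to the routine reallocation of the exponent between $1/\c_0$ and $1-1/\c_0$ in the splitting step needed to recover the stated constant $\exp(\c_0 M)$ together with the cardinality bound on $N$.
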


\begin{remark}
	For alternative approaches to infinite-dimensional problems, which allow even for exponential type work bounds, $\workf_j(\i_j)=K_{j,2}\exp(\w_j\i_j)$, consider for example \cite{dung2016hyperbolic,MR2719641,MR1881665}.
\end{remark}

\section{Applications}
\label{sec:com:applications}
\subsection{High-dimensional interpolation and integration}
\label{ssec:com:smolyak}
Smolyak introduced the algorithm that now bears his name in \cite{smolyak1963quadrature} to obtain efficient high-dimensional integration and interpolation formulas from univariate building blocks.
For example, assume we are given univariate interpolation formulas $S_{\i}$, $\i\in\N$ for functions in a Sobolev space $\vsunid$ that are based on evaluations in $2^{\i}$ points in $[0,1]$ and converge at the rate 
$$
\norm{S_{\i}-\Id}{\vsunid\to\vsunic}\leq C 2^{-k(\beta-\alpha)}
$$ for some $0\leq \alpha<\beta$. A straightforward high-dimensional interpolation formula is then the corresponding tensor product formula
\begin{equation*}
\bigotimes_{j=1}^{\cd}S_{\i_j}\colon \vsunid^{\otimes \cd}=:H^{\beta}_{\mix}([0,1]^{\cd})\to \vsunic^{\otimes \cd}=:H^{\alpha}_{\mix}([0,1]^{\cd})
\end{equation*}
for $(k_1,\dots,k_{\cd})\in\N^{\cd}$, where we consider both tensor product spaces to be completed with respect to the corresponding Hilbert space tensor norm \cite{Hackbusch2012}.
This can be interpreted as a numerical approximation method with values in a space of linear operators, 
\begin{equation*}
\nm(\mi):=\bigotimes_{j=1}^{\cd}S_{\i_j}\in\mathcal{L}(H^{\beta}_{\mix}([0,1]^{\cd}),H^{\alpha}_{\mix}([0,1]^{\cd}))=:\vsc,
\end{equation*}
whose discretization parameters $\mi=(\i_1,\dots,\i_{\cd})$ determine the resolution of interpolation nodes in each direction $j\in\{1,\dots,\cd\}$.

If we associate as work with $\nm(\mi)$ the number of required point evaluations, 
\begin{equation*}
\work(\nm(\mi)):=\prod_{j=1}^{\cd}2^{\i_j},  
\end{equation*}
then we are in the situation described in \Cref{ssec:com:finite}. Indeed, we have $\nm\in\vsbg(\vsc)$ with $\boundf_{j}(\i_j):=2^{-\i_j(\beta-\alpha)}$ by part (iii) of \Cref{exa}, since the operator norm of a tensor product operator between Hilbert space tensor products factorizes into the product of the operator norms of the constituent operators (see \cite[Proposition 4.127]{Hackbusch2012} and \cite[Section 26.7]{defant1992tensor}).

 In particular, the straightforward tensor product formulas $\nm(\i,\dots,\i)$ require the work
\begin{equation*}
\epsilon^{-\cd/(\beta-\alpha)}
\end{equation*}
to approximate the identity operator with accuracy $\epsilon>0$ in the operator norm, whereas Smolyak's algorithm $\smol_{L}(\nm)$ with an appropriate choice of $L=L(\epsilon)$ achieves the same accuracy with 
\begin{equation*}
\work(\smol_{L}(\nm))\lesssim \epsilon^{-1/(\beta-\alpha)}|\log\epsilon|^{(\cd-1)(1+1/(\beta-\alpha))},
\end{equation*}
according to \Cref{thm:main}. 
Here and in the following, we denote by $\lesssim$ estimates that hold up to factors that are independent of $\epsilon$.
As a linear combination of tensor product operators, Smolyak's algorithm $\smol_{L}(\nm)$ is a linear interpolation formula based on evaluations in the union of certain tensor grids. These unions are commonly known as \emph{sparse grids} \cite{Zenger91,bungartz2004sparse,garcke2012sparse}.
\begin{remark}
Interpolation of functions in general Banach spaces, with convergence measured in different general Banach spaces can be treated in the same manner. However, more care has to be taken with the tensor products. Once the algebraic tensor products of the function spaces are equipped with \emph{reasonable cross norms} \cite{Hackbusch2012} and completed, it has to be verified that the operator norm of linear operators between the tensor product spaces factorizes. Unlike for Hilbert spaces, this is not always true for general Banach spaces. However, it is true whenever the codomain is equipped with the \emph{injective tensor norm}, or when the domain is equipped with the \emph{projective tensor norm} \cite[Sections 4.2.9 and 4.2.12]{Hackbusch2012}. For example, the $L^{\infty}$-norm (and the similar $C^k$-norms) is an injective tensor norm on the product of $L^\infty$-spaces, while the $L^1$-norm is a projective norm on the tensor product of $L^1$-spaces.
\end{remark}

\subsection{Monte Carlo path simulation}
\label{ssec:com:MLMC}
Consider a stochastic differential equation (SDE)
\begin{equation}
\label{eq:com:stoch}
\begin{cases}
dS(t)=a(t,S(t))dt+b(t,S(t))dW(t)\quad0\leq t\leq T\\
S(0)=S_0\in \R^d,
\end{cases}
\end{equation}
with a Wiener process $W(t)$ and sufficiently regular coefficients $a,b\colon [0,T]\times \R^d\to\R$. A common goal in the numerical approximation of such SDE is to compute expectations of the form
\begin{equation*}
E[Q(S(T))],
\end{equation*} 
where $Q\colon\R^d\to\R$ is a Lipschitz-continuous quantity of interest of the final state $S(T)$. To approach this problem numerically, we first define random variables $S_{N}(t), 0\leq t\leq T$ as the forward Euler approximations of \Cref{eq:com:stoch} with $N\geq 1$ time steps. Next, we approximate the expectations $E[Q(S_{N}(T))]$ by Monte Carlo sampling using $M\geq 1$ independent samples $S^1_{N}(T),\dots,S^{M}_{N}(T)$ that are computed using independent realizations of the Wiener process. Together, this gives rise to the numerical approximation
\begin{equation*}
\nm(M,N):=\frac{1}{M}\sum_{i=1}^{M}Q(S^i_{N}(T)).
\end{equation*}
For fixed values of $M$ and $N$ this is a random variable that satisfies 
\begin{equation*}
\begin{split}
E[\left(\nm(M,N)-E[Q(S(T))]\right)^2]&=\left(E[\nm(M,N)]-E[Q(S(T))]\right)^2+\Var[\nm(M,N)]\\
&=(E[Q(S_N(T))]-E[Q(S(T))])^2+M^{-1}\Var[Q(S_{N}(T))]\\
& \lesssim N^{-2} + M^{-1},
\end{split}
\end{equation*}
where the last inequality holds by the weak rate of convergence of the Euler method \cite[Section 14.1]{Kloeden92} and by its $L^2$-boundedness as $N\to\infty$.
This shows that the random variables $\nm(M,N)$ converge to the limit $\nm_{\infty}=E[Q(S(T))]$, which itself is just a deterministic real number, in the sense of probabilistic mean square convergence as $M,N\to\infty$. To achieve a mean square error or order $\epsilon^2>0$, this straightforward approximation requires the simulation of $M\approx \epsilon^{-2}$ sample paths of \Cref{eq:com:stoch}, each with $N\approx \epsilon^{-1}$ time steps, which incurs the total work
\begin{equation*}
\work(\nm(M,N))=MN\approx\epsilon^{-3}.
\end{equation*}
Smolyak's algorithm allows us to achieve the same accuracy with the reduced work $\epsilon^{-2}$ of usual Monte Carlo integration. To apply the results of \Cref{ssec:com:finite}, we consider the reparametrized algorithm $\nm(k,l)$ with
$$
M_k:=M_0\exp(2k/3),
$$
$$
N_l:=N_0\exp(2l/3),
$$ for which the convergence and work parameters of \Cref{ssec:com:finite} attain the values $\c_j=1/3$, $\w_j=2/3$, and $\lc_j=\lw_j=0$, $j\in\{1,2\}$. (Here and in the following we implicitly round up non-integer values, which increases the required work only by a constant factor.) Indeed, we may write
\begin{equation*}
\nm(k,l)=\ml(\nm_1(k),\nm_2(l))), 
\end{equation*}
where $\nm_1(k)$, $k\in\N$ is the operator that maps random variables to an empirical average over $M_k$ independent samples, $\nm_2(l)$, $l\in\N$ is the random variable $Q(S_{N_l}(T))$, and $\ml$ denotes the application of linear operators to random variables. Since $\nm_1(k)$ converges in the operator norm to the expectation operator on the space of square integrable random variables at the usual Monte Carlo convergence rate $M_k^{-1/2}$ as $k\to\infty$, and $\nm_2(l)$ converges to $Q(S(T))$ at the strong convergence rate $N_l^{-1/2}$ of the Euler method in the $L^2$-norm \cite[Section 10.2]{Kloeden92} as $l\to\infty$, and since $\ml$ is linear in both arguments, the claimed values of the convergence parameters $\c_j$, $j\in\{1,2\}$ hold by part (iv) of \Cref{pro:com:2}.

\Cref{thm:main} now shows that choosing $L=L(\epsilon)$ such that
\begin{equation*}
E[(\smol_{L}(\nm)-E[Q(S(T))])^2]\leq \epsilon^2
\end{equation*}
incurs the work
\begin{equation}
\label{eq:com:MLwork}
\work(\smol_{L}(\nm))\lesssim \epsilon^{-2}|\log\epsilon|^{-3}.
\end{equation}

To link this result to the keyword \emph{multilevel approximation}, we observe that, thanks to our particular choice of parametrization, Smolyak's algorithm from \Cref{ssec:com:finite} takes the simple form
\begin{equation*}
\smol_{L}(\nm)=\sum_{k+l\leq L}\ddiff{\mix}\nm(k,l).
\end{equation*}
Since $\ddiff{\mix}=\ddiff{1}\circ\ddiff{2}$ and $\ddiff{1}=\dsum{1}^{-1}$ we may further write 
\begin{equation}
\label{eq:com:ml}
\begin{split}
\smol_{L}(\nm)=&\sum_{l=0}^{L}\sum_{k=0}^{L-l}\ddiff{\mix}\nm(k,l)\\
=&\sum_{l=0}^{L}\ddiff{2}\nm(L-l,l)\\
=&\frac{1}{M_L}\sum_{i=1}^{M_L}Q(S^{i}_{N_{0}}(T))+\sum_{l=1}^{L}\frac{1}{M_{L-l}}\sum_{i=1}^{M_{L-l}}\left(Q(S^{i}_{N_l}(T))-Q(S^{i}_{N_{l-1}}(T))\right),
\end{split}
\end{equation}
which reveals that Smolyak's algorithm employs a large number of samples from the coarse approximation $S_{N_0}(T)$, and subsequently improves on the resulting estimate of $E[Q(S(T))]$ by adding approximations of the expectations $E\left[Q(S_{N_{l}}(T))-Q(S_{N_{l-1}}(T))\right]$, $l\in\{1,\dots,L\}$ that are computed using less samples. 

\Cref{eq:com:ml} is a multilevel formula of the form analyzed in \cite{MR1629093} and \cite{giles2008multilevel}. Alternatively, this formula could also be deduced directly from the combination rule for triangles in \Cref{ssec:com:combination}. Compared to the analysis in \cite{giles2008multilevel}, our presentation has two shortcomings: First, our analysis only exploits the strong rate of the discretization method used to approximate \Cref{eq:com:stoch}. In the situation considered above, this does not affect the results, but for more slowly converging schemes a faster weak convergence rate may be exploited to obtain improved convergence rates. Second, the bound in  \Cref{eq:com:MLwork} is larger than that in \cite{giles2008multilevel} by the factor $|\log\epsilon|$. This factor can be removed by using independent samples for different values of $l$ in \Cref{eq:com:ml}, since we may then apply \Cref{rem:orth}.

\subsection{Multilevel quadrature}
\label{ssec:mlquadrature}

As in \Cref{exa} of \Cref{sec:com:truncation}, assume that we want to approximate the integral $\int_{[0,1]}\rs(x)\;dx\in\R$ using evaluations of approximations $\rs_{\ict}\colon[0,1]\to\R$, $\ict\in\N$. This is similar to the setting of the previous subsection, but with random sampling replaced by deterministic quadrature. 

 As before, denote by $S_{\i}$, $\i\in\N$ a sequence of quadrature formulas based on evaluations in $2^{\i}$ nodes. If we assume that point evaluations of $\rs_{\ict}$ require the work $\exp(\w\ict)$ for some $\w>0$, that
 \begin{equation*}
 \norm{\rs_{\ict}-\rs}{B}\lesssim 2^{-\kappa\ict}
 \end{equation*}
 for some $\kappa>0$ and a Banach space $B$ of functions on $[0,1]$ and that
 \begin{equation*}
 \norm{S_{\i}-\int_{[0,1]}\cdot\;dx}{B^*}\lesssim \exp(-\c\i)
 \end{equation*}
 for some $\c>0$, 
 then $\nm(\i,\ict):=S_{\i}\rs_{\ict}$ satisfies
 \begin{equation*}
 |S_{\i}\rs_{\ict}-\int_{[0,1]}\rs(x)\;dx|\lesssim \exp(-\c\i)+\exp(-\kappa\ict).
 \end{equation*}
 Hence, an accuracy of order $\epsilon>0$ can be achieved by setting 
 \begin{equation*}
\i:=-\log(\epsilon)/\c, \quad \ict:=-\log_2(\epsilon)/\kappa,
 \end{equation*}
 which requires the work
 \begin{equation*}
2^{\i}\exp(\w\ict)=\epsilon^{-1/\c-\w/\kappa}.
 \end{equation*}
 We have already shown the decay of the mixed differences,
 \begin{equation*}
 |\ddiff{\mix}\nm(\i,\ict)|\lesssim \exp(-\c \i )2^{-\kappa \ict},
 \end{equation*}
  in \Cref{exa}. Thus, 
 \Cref{thm:main} immediately shows that
 we can choose $L=L(\epsilon)$ such that Smolyak's algorithm satisfies 
 \begin{equation*}
 |\smol_{L}(\nm)-\int_{[0,1]}\rs(x)\;dx|\leq \epsilon,
 \end{equation*}
 with
 \begin{equation*}
 \work(\smol_{L}(\nm))\lesssim \epsilon^{-\max\{1/\c,\w/\kappa\}}|\log\epsilon|^{r}
 \end{equation*}
 for some $r=r(\c,\gamma,\kappa)\geq 0$. 
 
 As in \Cref{ssec:com:MLMC}, we may rewrite Smolyak's algorithm $\smol_{L}(\nm)$ in a multilevel form, which reveals that a Smolyak's algorithm employs a large number of evaluations of $\rs_{0}$, and subsequently improves on the resulting integral approximation by adding estimates of the integrals $\int_{[0,1]}\rs_{l}(x)-\rs_{l-1}(x)\;dx$, $l>0$,   that are computed using less quadrature nodes.

\subsection{Partial differential equations}
\label{ssec:com:combination}
The original Smolyak algorithm inspired two approaches to the numerical solution of partial differential equations (PDEs). The \emph{intrusive} approach is to solve discretizations of the PDE that are built on sparse grids. The \emph{non-intrusive} approach, which we describe here, instead applies the general Smolyak algorithm to product type discretizations whose resolution in the $j$-th direction is described by the parameter $\i_j$ \cite{griebel2014convergence,Zenger91}.

We discuss here how the non-intrusive approach can be analyzed using error expansions of finite difference approximations. For example, the work \cite{GriebelSchneiderZenger1992}, which introduced the name combination technique, exploited the fact that for the Poisson equation with sufficiently smooth data on $[0,1]^2$, finite difference approximations $u_{\i_1,\i_2}\in L^{\infty}([0,1]^2)$ with meshwidths $h_{j}=2^{-\i_{j}}$ in the directions $j\in\{1,2\}$ satisfy 
\begin{equation}
\label{eq:com:pdeexp}
u-u_{\i_1,\i_2}=w_1(h_1)+w_2(h_2)+w_{1,2}(h_1,h_2),
\end{equation}
where $u$ is the exact solution and $w_{1}(h_1),w_{2}(h_2),w_{1,2}(h_1,h_2)\in L^{\infty}([0,1]^2)$ are error terms that converge to zero in $L^{\infty}$ at the rates $\O(h_1^2)$, $\O(h_2^2)$, and $\O(h_1^2h_2^2)$, respectively.
Since the work required for the computation of $\nm(\i_1,\i_2):=u_{\i_1,\i_2}$ usually satisfies 
\begin{equation*}
\work(\nm(\i_1,\i_2))\approx (h_1h_2)^{-\gamma}
\end{equation*}
for some $\gamma\geq 1$ depending on the employed solver, an error bound of size $\epsilon>0$ could be achieved with the straightforward choice $\i_1:=\i_2:=-(\log_2\epsilon)/2$, which would require the work
\begin{equation*}
\work(\nm(\i_1,\i_2))\approx \epsilon^{-\gamma}.
\end{equation*}

 Since \Cref{eq:com:pdeexp} in combination with part (iii) of \Cref{pro:com:2} shows that $\nm\in \vsb_{(\boundf_j)_{j=1}^{2}}$ with $\boundf_j(\i):=2^{-2\i_j}$, we may deduce from \Cref{thm:main} that Smolyak's algorithm applied to $\nm$ requires only the work 
 \begin{equation*}
 \epsilon^{-\gamma/2}|\log\epsilon|^{1+\gamma/2}
 \end{equation*}
 to achieve the same accuracy.
The advantage of Smolyak's algorithm becomes even more significant in higher dimensions. All that is required to generalize the analysis presented here to high-dimensional problems, as well as to different PDE and different discretization methods, are error expansions such as \Cref{eq:com:pdeexp}.

\subsection{Uncertainty quantification}
A common goal in uncertainty quantification \cite{BabuskaTemponeZouraris2004,LeMaitreKnio2010,haji2015multi} is the approximation of response surfaces
\begin{equation*}
\domPS\ni \psmi\mapsto \rs(\psmi):=Q(\pde_{\psmi})\in\R.
\end{equation*}
Here, $\psmi\in\domPS\subset\R^m$ represents parameters in a PDE and $Q(\pde_{\psmi})$ is a real-valued quantity of interest of the corresponding solution $\pde_{\psmi}$.  For example, a thoroughly studied problem is the parametric linear elliptic second order equation with coefficients $a\colon U\times \domPS\to\R$, 
\begin{equation*}
\begin{cases}
-\nabla_x \cdot(a(x,\psmi)\nabla_x \pde_{\psmi}(x))=g(x)\quad&\text{ in }U\subset\R^d\\
\quad\quad\quad\pde_{\psmi}(x)=0  \quad&\text{ on }\partial U,
\end{cases}
\end{equation*}
whose solution for any fixed $\psmi\in\domPS$ is a function $\pde_{\psmi}\colon U\to \R$. 

Approximations of response surfaces may be used for optimization, for worst-case analysis, or to compute statistical quantities such as mean and variance in the case where $\domPS$ is equipped with a probability distribution. The non-intrusive approach to compute such approximations, which is known as \emph{stochastic collocation} in the case where $\domPS$ is equipped with a probability distribution, is to compute the values of $\rs$ for finitely many values of $\psmi$ and then interpolate. For example, if we assume for simplicity that $\domPS=\prod_{j=1}^{m}[0,1]$, then we may use, as in \Cref{ssec:com:smolyak}, a sequence of interpolation operators $S_{\i}\colon H^{\beta}([0,1])\to H^{\alpha}([0,1])$ based on evaluations in $(\ps_{\i,i})_{i=1}^{2^\i}\subset [0,1]$.
However, unlike in \Cref{ssec:com:smolyak}, we cannot compute values of $\rs$ exactly but have to rely on a numerical PDE solver. If we assume that this solver has discretization parameters $\mict=(\ict_{1},\dots,\ict_{d})\in \N^{d}$ and returns approximations $\pde_{\psmi,\mict}$ such that the functions
\begin{equation*}
\begin{split}
\rs_{\mict}\colon \domPS&\to\R\\
\psmi&\mapsto \rs_{\mict}(\psmi):=Q(\pde_{\psmi,\mict})
\end{split}
\end{equation*}
are elements of $H^{\beta}_{\mix}([0,1]^m)$, then we may define the numerical approximation method
\begin{equation*}
\begin{split}
&\nm\colon \N^{m}\times\N^{d}\to H^{\alpha}_{\mix}([0,1]^m)=:\vsc\\
&\nm(\mi,\mict):=\Big(\bigotimes_{j=1}^{m}S_{\i_j}\Big)\rs_{\mict},
\end{split}
\end{equation*}
with $n:=m+d$ discretization parameters.

At this point the reader should already be convinced that straightforward approximation is a bad idea. We therefore omit this part of the analysis, and directly move on to the application of Smolyak's algorithm. To do so, we need to identify functions $\boundf_{j}\colon \N\to\Rp$ such that $\nm\in \vsb_{(\boundf_j)_{j=1}^{n}}(\vsc)$. For this purpose, we write $\nm$ as
\begin{equation*}
\nm(\mi,\mict)=\ml(\nm_1(\mi),\nm_2(\mict)),
\end{equation*}
where 
\begin{equation*}
\nm_1(\mi):=\bigotimes_{j=1}^{m}S_{\i_j}\in \lin\Big(H^{\beta}_{\mix}([0,1]^m);H^{\alpha}_{\mix}([0,1]^m)\Big)=:\vsc_1 \quad \forall \mi\in \N^m
\end{equation*}
\begin{equation*}
\nm_2(\mict):=\rs_{\mict}\in H^{\beta}_{\mix}([0,1]^m)=:\vsc_2\quad\forall \mict \in \N^d
\end{equation*}
and
\begin{equation*}
\ml\colon Y_1 \times  Y_2 \to \vsc
\end{equation*} 
is the application of linear operators in $\vsc_1$ to functions in $\vsc_2$. Since $\ml$ is continuous and multilinear, we may apply part (iv) of \Cref{pro:com:2} to reduce our task to the study of $\nm_1$ and $\nm_2$. The first part can be done exactly as in \Cref{ssec:com:smolyak}. The second part can be done similarly to \Cref{ssec:com:combination}. However, we now have to verify not only that the approximations $\pde_{\psmi,\mict}$ converge to the exact solutions $\pde_{\psmi}$ for each fixed value of $\psmi$ as $\min_{j=1}^{d}\ict_{j}\to\infty$, but that this convergence holds in some uniform sense over the parameter space.

More specifically, let us denote by $\ddiff{\mix}^{(\mict)}$ the mixed difference operator with respect to the parameters $\mict$ and let us assume that 
\begin{equation*} \norm{\ddiff{\mix}^{(\mict)}\rs_{\mict}}{H^{\beta}_{\mix}([0,1]^m)}\lesssim \prod_{j=1}^{d}\exp(-\kappa_j \ict_{j})=:\prod_{j=1}^{d}\boundf^{(2)}_{j}(\ict_j)\quad\forall \mict\in\N^{d}.
\end{equation*}
For example, such bounds are proven in \cite{harbrecht2013multilevel,haji2015multi}. 
If the interpolation operators satisfy as before
\begin{equation*}
\norm{S_{\i}-\Id}{H^{\beta}([0,1])\to H^{\alpha}([0,1])}\lesssim 2^{-\i(\beta-\alpha)}=:\boundf^{(1)}(\i)\quad\forall\,\i\in\N,
\end{equation*} then the results of \Cref{ssec:com:smolyak} together with part (iv) of \Cref{pro:com:2} shows that 
\begin{equation*}
\nm\in \vsb_{(\boundf^{(1)})_{j=1}^{m}\cup (\boundf^{(2)}_j)_{j=1}^{d}}(\vsc).
\end{equation*}
If we further assume that the work required by the PDE solver with discretization parameters $\mict$ is bounded by $\exp(\wmi^{(2)}\cdot\mict)$ for some $\wmi\in\Rp^{d}$, then we may assign as total work to the algorithm $\nm(\mi,\mict)$ the value
\begin{equation*}
\work(\nm(\mi,\mict)):=2^{|\mi|_{1}}\exp(\wmi\cdot \mict),
\end{equation*} which is the number of required samples, $2^{|\mi|_{1}}$, times the bound on the work per sample, $\exp(\wmi\cdot\mict)$. Thus, by \Cref{thm:main}, Smolyak's algorithm achieves the accuracy
\begin{equation*}
\norm{\smol_{L}(\nm)-\rs}{\vsc}\lesssim \epsilon
\end{equation*}
with
\begin{equation*}
\work(\smol_{L}(\nm))\lesssim \epsilon^{-\rho}|\log\epsilon|^{r},
\end{equation*}
where $\rho:=\max\{1/(\beta-\alpha),\max\{\w_j/\kappa_j\}_{j=1}^{d}\}$ and $r\geq 0$ as in \Cref{ssec:com:finite}.

\section{Conclusion}
We showed how various existing efficient numerical methods for integration, Monte Carlo simulations, interpolation, the solution of partial differential equations, and uncertainty quantification can be derived from two common underlying principles: decomposition and efficient truncation. The analysis of these methods was divided into proving decay of mixed differences by means of \Cref{pro:com:2} and then applying general bounds on exponential sums in form of \Cref{thm:main}. 

Besides simplifying and streamlining the analysis of existing methods, we hope that the framework provided in this work encourages novel applications. Finally, we believe that the general version of Smolyak's algorithm presented here may be helpful in designing flexible and reusable software implementations that can be applied to future problems without modification.
\appendix
\section{Exponential sums}

\begin{lemma}
Let $\w_j>0$, $\c_j>0$, and $\lw_j>0$ for $j\in\{1,\dots,\cd\}$. Then
\begin{equation*}
\sum_{(\cmi+\wmi)\cdot\mi\leq L}\exp(\wmi\cdot\mi)(\mi+\mathbf{1})^{\lwmi}\leq C(\wmi,\lwmi,\cd)\exp(\mu L)(L+1)^{\cd^*-1+\lw^*},
\end{equation*}
where $\rho:=\max_{j=1}^{\cd}\w_j/\c_j$, $\mu:=\frac{\rho}{1+\rho}$, $J:=\{j\in\{1,\dots,\cd\}:\w_j/\c_j=\rho\}$, $\cd^*:=|J|$, $\lw^*:=\sum_{j\in J}\lw_j$, and $(\mi+\mathbf{1})^{\lwmi}:=\prod_{j=1}^{\cd}(\i_j+1)^{\lw_j}$.
\end{lemma}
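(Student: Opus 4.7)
The sum has two distinct types of contributions: indices $j\in J$ (where $\w_j/\c_j$ is maximal) will supply the polynomial factor $(L+1)^{\cd^*-1+\lw^*}$, while indices $j\in J^c$ (where $\w_j/\c_j<\rho$) carry exponential slack that lets the corresponding sums converge unconditionally. The plan is to reduce to an ``all indices in $J$'' base case by factoring the sum along the splitting $\mi=(\mi_J,\mi_{J^c})$.

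\paragraph{Step 1: Base case $J=\{1,\dots,\cd\}$.}
When $\w_j=\rho\c_j$ for every $j$, we have $\wmi\cdot\mi=\mu(\cmi+\wmi)\cdot\mi$, so for $\mi$ in the simplex
$$\exp(\wmi\cdot\mi)=\exp(\mu L)\exp(-\mu r(\mi)),\qquad r(\mi):=L-(\cmi+\wmi)\cdot\mi\geq 0.$$
The factor $(\mi+\mathbf{1})^{\lwmi}$ is bounded by $C(L+1)^{\lw^*}$ since $\i_j\leq L/(\c_j+\w_j)$. The key step is to stratify by the layer $r\in[k,k+1)$: using the elementary lattice-point estimate
$$\#\{\mi\in\N^\cd:(\cmi+\wmi)\cdot\mi\in(M_1,M_2]\}\leq C(M_2+1)^{\cd-1}(M_2-M_1+1),$$
each layer contains at most $C(L+1)^{\cd-1}$ points. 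Summing the geometric series $\sum_{k\geq 0}\exp(-\mu k)<\infty$ then yields the bound $C\exp(\mu L)(L+1)^{\cd-1+\lw^*}$, which matches the claim since here $\cd^*=\cd$.

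\paragraph{Step 2: Reduction to the base case.}
For the general case, fix $\mi_{J^c}$ with $\sum_{j\in J^c}(\c_j+\w_j)\i_j\leq L$ and set $L'':=L-\sum_{j\in J^c}(\c_j+\w_j)\i_j\geq 0$. The inner sum over $\mi_J$ fits Step 1 (with $L$ replaced by $L''$ and $\cd$ replaced by $\cd^*$), giving an upper bound $C\exp(\mu L'')(L''+1)^{\cd^*-1+\lw^*}$. Bounding $(L''+1)^{\cd^*-1+\lw^*}\leq (L+1)^{\cd^*-1+\lw^*}$ and using
$$\exp(\mu L'')\exp\Bigl(\sum_{j\in J^c}\w_j\i_j\Bigr)=\exp(\mu L)\exp\Bigl(-\sum_{j\in J^c}a_j\i_j\Bigr),\qquad a_j:=\mu(\c_j+\w_j)-\w_j=\frac{\rho\c_j-\w_j}{1+\rho}>0,$$
the outer sum over $\mi_{J^c}$ (after dropping the constraint, which only enlarges the sum) decouples into a product of convergent one-dimensional sums $\sum_{\i_j\geq 0}\exp(-a_j\i_j)(\i_j+1)^{\lw_j}<\infty$. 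Combining yields the claimed bound with a constant depending only on $\wmi,\lwmi,\cd$.

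\paragraph{Main obstacle.}
The naive bound $(\mi+\mathbf{1})^{\lwmi}\mathbb{1}[\text{simplex}]$ together with $\exp(\wmi\cdot\mi)\leq\exp(\mu L)$ would produce $\exp(\mu L)(L+1)^{\cd+\lw^*_{\text{all}}}$, which is too weak by a full factor of $L$ (and contains superfluous $\lw_j$ for $j\in J^c$). Both losses are avoided by keeping the $\exp(-\mu r)$ factor and stratifying by $r$ in Step~1, which reduces the effective dimension from $\cd^*$ to $\cd^*-1$; and by the identity $\mu(\c_j+\w_j)-\w_j>0$ for $j\in J^c$ in Step~2, which turns the would-be polynomial blow-up in $\mi_{J^c}$ into a convergent geometric sum.
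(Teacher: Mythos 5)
Your proof is correct, and it takes a genuinely different route from the paper. The paper passes to a continuous integral (justified by monotonicity after shifting $L$), performs the change of variables $y_j=(\c_j+\w_j)x_j$, and then reduces to one-dimensional integrals via a polar-type substitution $\ymi\mapsto(|\ymi|_1,y_2,\dots)$, which produces the Jacobian factors $u^{|J|-1}$ responsible for the exponent $\cd^*-1$. You instead stay entirely in the discrete setting: the identity $\w_j=\mu(\c_j+\w_j)$ for $j\in J$ converts the summand into $\exp(\mu L)$ times a decaying function of the slack $r(\mi)$, and the slab-counting estimate $\#\{\mi:(\cmi+\wmi)\cdot\mi\in(M_1,M_2]\}\leq C(M_2+1)^{\cd-1}(M_2-M_1+1)$ plays the role of the paper's Jacobian, yielding the same effective dimension reduction to $\cd^*-1$. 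Your Step 2 decoupling via $a_j=\mu(\c_j+\w_j)-\w_j=(\rho\c_j-\w_j)/(1+\rho)>0$ for $j\in J^c$ is the discrete counterpart of the paper's observation that $\mu_-<\mu$ makes the remaining integral $O(1)$. What your approach buys is that it avoids the sum-to-integral comparison and the change-of-variables bookkeeping, and it makes transparent why the $\lw_j$ with $j\in J^c$ do not appear in the final exponent (they are absorbed into convergent one-dimensional sums); what the paper's approach buys is a single computational template that is reused almost verbatim for the companion tail-sum lemma. All the individual estimates you invoke check out, including the positivity of $a_j$ and the uniformity in $L''$ of the Step 1 constant.
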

\begin{proof}
First, we assume without loss of generality that the dimensions are ordered according to whether they belong to $J$ or $\comp{J}:=\{1,\dots,\cd\}\setminus J$. To avoid cluttered notation we then separate dimensions by plus or minus signs in the subscripts; for example, we write 
$\lwmi=(\lwmi_J,\lwmi_{\comp{J}})=:(\lwmi_+,\lwmi_-)$.

Next, we may replace the sum by an integral over $\{(\cmi+\wmi)\cdot\xmi\leq L\}$. Indeed, by monotonicity we may do so if we replace $L$ by $L+|\cmi+\wmi|_1$, but looking at the final result we observe that a shift of $L$ only affects the constant $C(\wmi,\lwmi,\cd)$. 

Finally, using a change of variables $y_{j}:=(\c_{j}+\w_{j})x_{j}$ and the shorthand $\mumi:=\wmi/(\cmi+\wmi)$ (with componentwise division) we obtain
\begin{equation*}
\begin{split}
\int_{(\cmi+\wmi)\cdot \xmi\leq L}\exp(\wmi\cdot \xmi)&(\xmi+\mathbf{1})^{\lwmi}\;d\xmi\leq C\int_{|\ymi|_1\leq L}\exp(\mumi\cdot \ymi)(\ymi+\mathbf{1})^{\lwmi}\;d\ymi\\
=C\int_{|\ymi_{+}|_1\leq L}&\exp(\mumi_{+}\cdot \ymi_{+})(\ymi_{+}+\mathbf{1})^{\lwmi_{+}}\int_{|\ymi_{-}|_{1}\leq L-|\ymi_{+}|_1}\exp(\mumi_{-}\cdot\ymi_{-})(\ymi_{-}+\mathbf{1})^{\lwmi_{-}}\;d\ymi_{-}\;d\ymi_{+}\\
\leq C\int_{|\ymi_{+}|_1\leq L}&\exp(\mu|\ymi_{+}|_1)(\ymi_{+}+\mathbf{1})^{\lwmi_{+}}\int_{|\ymi_{-}|_{1}\leq L-|\ymi_{+}|_1}\exp(\mu_{-}|\ymi_-|_{1})(\ymi_{-}+\mathbf{1})^{\lwmi_{-}}\;d\ymi_{-}\;d\ymi_{+}=(\star),
\end{split}
\end{equation*}
where the last equality holds by definition of $\mu=\max\{\mumi_{+}\}$ and $\mu_{-}:=\max\{\mumi_{-}\}$. We use the letter $C$ here and in the following to denote quantities that depend only on $\wmi,\lwmi$ and $\cd$ but may change value from line to line. Using $(\ymi_{
+}+\mathbf{1})^{\lwmi_{+}}\leq (|\ymi_{+}|_1+1)^{|\lwmi_{+}|_1}$ and $(\ymi_{-}+\mathbf{1})^{\lwmi_{-}}\leq (|\ymi_{-}|_1+1)^{|\lwmi_{-}|_1}$ and the linear change of variables $\ymi\mapsto (|\ymi|_{1},y_2,\dots,y_n)$ in both integrals, we obtain 
\begin{equation*}
\begin{split}
(\star)&\leq C\int_{|\ymi_{+}|_1\leq L}\exp(\mu|\ymi_{+}|_1)(|\ymi_{+}|_{1}+1)^{|\lwmi_{+}|_{1}}\int_{|\ymi_{-}|_{1}\leq L-|\ymi_{+}|_1}\exp(\mu_{-}|\ymi_-|_{1})(|\ymi_{-}|_1+1)^{|\lwmi_{-}|_1}\;d\ymi_{-}\;d\ymi_{+}\\
&\leq C\int_{0}^L\exp(\mu u)(u+1)^{|\lwmi_{+}|_{1}}u^{|J|-1}\int_{0}^{L-u}\exp(\mu_{-}v)(v+1)^{|\lwmi_{-}|_1}v^{|\comp{J}|-1}\;dv\;du\\
&\leq C(L+1)^{|\lwmi_{+}|_{1}}L^{|J|-1}\int_{0}^L\exp(\mu u)((L-u)+1)^{|\lwmi_{-}|_1}(L-u)^{|\comp{J}|-1}\int_{0}^{L-u}\exp(\mu_{-}v)\;dv\;du\\
&\leq C(L+1)^{|\lwmi_{+}|_{1}+|J|-1}\int_{0}^L\exp(\mu u)(L-u+1)^{|\lwmi_{-}|_1}(L-u)^{|\comp{J}|-1}\exp(\mu_-(L-u))\;du\\
&=C(L+1)^{|\lwmi_{+}|_{1}+|J|-1}\exp(\mu L)\int_{0}^L\exp(-(\mu-\mu_{-})w)(w+1)^{|\lwmi_{-}|_1}w^{|\comp{J}|-1}\;dw\\
&\leq C(L+1)^{|\lwmi_{+}|_{1}+|J|-1}\exp(\mu L),
 \end{split}
\end{equation*}
where we used supremum bounds for both integrals for the third inequality, the change of variables $w:=L-u$ for the penultimate equality, and the fact that $\mu>\mu_{-}$ for the last inequality.
\end{proof}

\begin{lemma}
Let $\w_j>0$, $\c_j>0$, and $\lc_j>0$ for $j\in\{1,\dots,\cd\}$. Then
\begin{equation*}
\sum_{(\cmi+\wmi)\cdot\mi>L}\exp(-\cmi\cdot\mi)(\mi+\mathbf{1})^{\lcmi} \leq C(\cmi,\lcmi,\cd)\exp(-\nu L)(L+1)^{\cd^{*}-1+\lc^*},
\end{equation*}
where $\rho:=\max_{j=1}^{\cd}\w_j/\c_j$, $\nu:=\frac{1}{1+\rho}$, $J:=\{j\in\{1,\dots,\cd\}:\w_j/\c_j=\rho\}$, $\cd^*:=|J|$, $\lc^*:=\sum_{j\in J}\lw_j$, and $(\mi+\mathbf{1})^{\lcmi}:=\prod_{j=1}^{\cd}(\i_j+1)^{\lc_j}$.
\end{lemma}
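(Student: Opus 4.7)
The plan is to mimic the proof of the preceding lemma almost verbatim, with the roles of ``growing'' and ``decaying'' exchanged and a partial sum replaced by a tail. Four steps are needed: reducing the sum to an integral, performing a linear change of variables, splitting the coordinates along $J$ and $\comp{J}$, and finally a one-dimensional estimate that factors out $\exp(-\nu L)$.

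First, I would replace the tail sum by the corresponding integral over $\{\xmi\ge 0:(\cmi+\wmi)\cdot\xmi>L\}$. The integrand $\exp(-\cmi\cdot\xmi)(\xmi+\mathbf{1})^{\lcmi}$ is monotonically decreasing in each coordinate once that coordinate exceeds $\lc_j/\c_j$, so the standard integral comparison dominates the sum by the same integral evaluated on the shifted set $\{(\cmi+\wmi)\cdot\xmi>L-|\cmi+\wmi|_1\}$; the shift of $L$ and the contribution of the finitely many boundary cubes both affect only the constant $C(\cmi,\lcmi,\cd)$. I would then change variables $y_j:=(\c_j+\w_j)x_j$, which turns the constraint into $|\ymi|_1>L$ and the exponent into $-\numi\cdot\ymi$ with $\nu_j:=\c_j/(\c_j+\w_j)=1/(1+\w_j/\c_j)$. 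By construction $\min_j\nu_j=\nu$, attained exactly on $J$.

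Splitting $\ymi=(\ymi_+,\ymi_-)$ along $J$ and $\comp{J}$ and setting $\nu_-:=\min_{j\in\comp{J}}\nu_j>\nu$, I would bound $(\ymi_\pm+\mathbf{1})^{\lcmi_\pm}\le(|\ymi_\pm|_1+1)^{|\lcmi_\pm|_1}$ and apply the same linear substitution $\ymi\mapsto(|\ymi|_1,y_2,\dots)$ as in the previous lemma to produce the simplex-surface factors $u^{\cd^*-1}$ and $v^{|\comp{J}|-1}$ with $u:=|\ymi_+|_1$, $v:=|\ymi_-|_1$. The task then reduces to bounding
\begin{equation*}
I(L):=\iint_{u+v>L,\;u,v\geq 0}e^{-\nu u}e^{-\nu_- v}u^{\cd^*-1}(u+1)^{\lc^*}v^{|\comp{J}|-1}(v+1)^{|\lcmi_-|_1}\,dv\,du.
\end{equation*}

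I would split the domain into $\{u>L\}$ and $\{0\le u\le L,\,v>L-u\}$. On the first piece the $v$-integral extends over $[0,\infty)$ and evaluates to a constant depending only on $\nu_-$, $|\comp{J}|$, and $|\lcmi_-|_1$, while the outer $\int_L^\infty e^{-\nu u}(u+1)^{\cd^*-1+\lc^*}\,du$ is $\lesssim e^{-\nu L}(L+1)^{\cd^*-1+\lc^*}$ by a standard incomplete-gamma estimate. On the second piece the inner integral is bounded above by $C\,e^{-\nu_-(L-u)}(L-u+1)^{|\comp{J}|-1+|\lcmi_-|_1}$; substituting $w:=L-u$ and factoring out $e^{-\nu_- L}$ converts the outer integral into $e^{-\nu L}\int_0^L e^{-(\nu_--\nu)w}(L-w)^{\cd^*-1}(L-w+1)^{\lc^*}(w+1)^{|\comp{J}|-1+|\lcmi_-|_1}\,dw$. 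Using the uniform bound $(L-w)^{\cd^*-1}(L-w+1)^{\lc^*}\le(L+1)^{\cd^*-1+\lc^*}$ and the fact that $\nu_--\nu>0$ renders the residual $w$-integral bounded by a constant depending only on $\cmi,\wmi,\lcmi,\cd$, which yields the claimed estimate. The only delicate point is the initial sum-to-integral step, since the integrand is not globally decreasing; but the polynomial prefactor is dominated by the exponential beyond a fixed threshold, so the standard monotone comparison goes through after absorbing a harmless constant shift of $L$ into the prefactor.
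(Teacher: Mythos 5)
Your proposal is correct and follows essentially the same route as the paper's proof: sum-to-integral comparison (with the shift of $L$ absorbed into the constant), the change of variables $y_j=(\c_j+\w_j)x_j$, splitting coordinates along $J$ and $\comp{J}$ with $\nu_-$ strictly larger than $\nu$, reduction to one-dimensional integrals via the substitution $\ymi\mapsto(|\ymi|_1,y_2,\dots)$, and the two-piece split of the domain matching the paper's $(\star\star)$ and $(\star\star\star)$ with the same incomplete-gamma estimate. The only differences are cosmetic (you apply the uniform polynomial bound after rather than before the substitution $w:=L-u$, and you correctly take $\nu_-$ as a minimum where the paper's wording says maximum), so no further comment is needed.
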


\begin{proof}
First, we assume without loss of generality that the dimensions are ordered according to whether they belong to $J$ or $\comp{J}$. To avoid cluttered notation we then separate dimensions by plus or minus signs in the subscripts; for example, we write 
$\lcmi=(\lcmi_J,\lcmi_{\comp{J}})=:(\lcmi_+,\lcmi_-)$.

Next, we may replace the sum by an integral over $\{(\cmi+\wmi)\cdot\xmi>L\}$. Indeed, by monotonicity we may do so if we replace $L$ by $L-|\cmi+\wmi|_1$, but looking at the final result we observe that a shift of $L$ only affects the constant $C(\cmi,\lcmi,\cd)$. 

Finally, using a change of variables $y_{j}:=(\c_{j}+\w_{j})x_{j}$ and the shorthand $\numi:=\cmi/(\cmi+\wmi)$ (with componentwise division) we obtain
\begin{equation*}
\begin{split}
&\int_{(\cmi+\wmi)\cdot \xmi>L}\exp(-\cmi\cdot \xmi)(\xmi+\mathbf{1})^{\lcmi}\;d\xmi\leq C\int_{|\ymi|_1>L}\exp(-\numi\cdot \ymi)(\ymi+\mathbf{1})^{\lcmi}\;d\ymi\\
&=C\int_{|\ymi_{+}|_1> L}\exp(-\numi_{+}\cdot \ymi_{+})(\ymi_{+}+\mathbf{1})^{\lcmi_{+}}\int_{|\ymi_{-}|_{1}>(L-|\ymi_{+}|_1)^{+}}\exp(-\numi_{-}\cdot\ymi_{-})(\ymi_{-}+\mathbf{1})^{\lcmi_{-}}d\ymi_{-}d\ymi_{+}\\
&\leq C\int_{|\ymi_{+}|_1> L}\exp(-\nu|\ymi_{+}|_1)(\ymi_{+}+\mathbf{1})^{\lcmi_{+}}\int_{|\ymi_{-}|_{1}>(L-|\ymi_{+}|_1)^{+}}\exp(-\nu_{-}|\ymi_-|_{1})(\ymi_{-}+\mathbf{1})^{\lcmi_{-}}d\ymi_{-}d\ymi_{+}\\
&=:(\star),
\end{split}
\end{equation*}
where the last equality holds by definition of $\nu=\max\{\numi_{+}\}$ and $\nu_{-}:=\max\{\numi_{-}\}$. We use the letter $C$ here and in the following to denote quantities that depend only on $\cmi,\lcmi$ and $\cd$ but may change value from line to line. Using $(\ymi_{
+}+\mathbf{1})^{\lcmi_{+}}\leq (|\ymi_{+}|_1+1)^{|\lcmi_{+}|_1}$ and $(\ymi_{-}+\mathbf{1})^{\lcmi_{-}}\leq (|\ymi_{-}|_1+1)^{|\lcmi_{-}|_1}$ and the linear change of variables $\ymi\mapsto (|\ymi|_{1},y_2,\dots,y_n)$ in both integrals, we obtain  
\begin{equation*}
\begin{split}
(\star)\leq&  C\int_{|\ymi_{+}|_1>0}\exp(-\nu|\ymi_{+}|_1)(|\ymi_{+}|_{1}+1)^{|\lcmi_{+}|_{1}}\int_{|\ymi_{-}|_{1}>(L-|\ymi_{+}|_1)^{+}}\exp(-\nu_{-}|\ymi_-|_{1})(|\ymi_{-}|_1+1)^{|\lcmi_{-}|_1}d\ymi_{-}d\ymi_{+}\\
\leq& C\int_{0}^\infty \exp(-\nu u)(u+1)^{|\lcmi_{+}|_{1}}u^{|J|-1}\int_{(L-u)^{+}}^{\infty}\exp(-\nu_{-}v)(v+1)^{|\lcmi_{-}|_1}v^{|\comp{J}|-1}\;dv\;du\\
=& C\int_0^{L}\exp(-\nu u)(u+1)^{|\lcmi_{+}|_{1}+|J|-1}\int_{L-u}^{\infty}\exp(-\nu_{-}v)(v+1)^{|\lcmi_{-}|_1+|\comp{J}|-1}\;dv\;du\\
& + C\int_L^{\infty}\exp(-\nu u)(u+1)^{|\lcmi_{+}|_{1}+|J|-1}\int_{0}^{\infty}\exp(-\nu_{-}v)(v+1)^{|\lcmi_{-}|_1+|\comp{J}|-1}\;dv\;du\\
=&:(\star\star)+(\star\star\star).
 \end{split}
\end{equation*}
To bound $(\star\star)$, we estimate the inner integral using the inequality $\int_{a}^{\infty}\exp(-b v)(v+1)^{c}\,dv\leq C\exp(-b a)(a+1)^c$ \cite[(8.11.2)]{NIST:DLMF}, which is valid for all positive $a, b, c$:
\begin{equation*}
\begin{split}
(\star\star)&\leq C\int_0^{L}\exp(-\nu u)(u+1)^{|\lcmi_{+}|_{1}+|J|-1}\exp(-\nu_{-}(L-u))(L-u+1)^{|\lcmi_{-}|_1+|\comp{J}|-1}\;du\\
&\leq C(L+1)^{|\lcmi_{+}|_{1}+|J|-1}\int_0^{L}\exp(-\nu(L-w))\exp(-\nu_{-}w)(w+1)^{|\lcmi_{-}|_1+|\comp{J}|-1}\;dw\\
&=C(L+1)^{|\lcmi_{+}|_{1}+|J|-1}\exp(-\nu L)\int_0^{L}\exp(-(\nu_{-}-\nu)w)(w+1)^{|\lcmi_{-}|_1+|\comp{J}|-1}\;dw\\
&\leq C(L+1)^{|\lcmi_{+}|_{1}+|J|-1}\exp(-\nu L),
\end{split}
\end{equation*}
where we used a supremum bound and the change of variables $w:=L-u$ for the second inequality, and the fact that $\nu_{-}>\nu$ for the last inequality. Finally, to bound $(\star\star\star)$, we observe that the inner integral is independent of $L$, and bound the outer integral in the same way we previously bounded the inner integral. This shows
\begin{equation*}
(\star\star\star)\leq C\exp(-\nu L)(L+1)^{|\lcmi_{+}|_{1}+|J|-1}.
\end{equation*}
\end{proof}

\clearpage

\end{document}